\documentclass[11pt,a4paper]{article}
\usepackage[T1]{fontenc}
\usepackage{iftex}
\ifPDFTeX
\usepackage[utf8]{inputenc}
\fi
\usepackage{lmodern}
\usepackage{amsmath,amssymb,amsthm,mathtools}
\usepackage[margin=27mm,headheight=15pt]{geometry}
\usepackage{enumitem}
\usepackage{booktabs,array}
\usepackage{microtype}
\usepackage{xcolor}
\usepackage{fancyhdr}
\usepackage[colorlinks=true,linkcolor=black,citecolor=black,urlcolor=blue!45!black]{hyperref}
\hypersetup{pdftitle={Reduction of integer tiles via CRT and base-p digits},pdfauthor={Chunlin Li, Jie Wu, Wenchuan Hu, Chao Wang, Erxiao Wang, Fuhai Zhu}}
\setlist[enumerate]{label=\textup{(\roman*)},itemsep=.3em,topsep=.4em,leftmargin=2em}
\theoremstyle{plain}
\newtheorem{theorem}{Theorem}[section]
\newtheorem{lemma}[theorem]{Lemma}
\newtheorem{proposition}[theorem]{Proposition}
\newtheorem{corollary}[theorem]{Corollary}
\theoremstyle{remark}

\numberwithin{equation}{section}
\newcommand{\Z}{\mathbb Z}
\newcommand{\Q}{\mathbb Q}
\newcommand{\R}{\mathbb R}
\newcommand{\C}{\mathbb C}
\newcommand{\F}{\mathbb F}
\newcommand{\ord}{\operatorname{ord}}

\newcommand{\eps}{\varepsilon}
\makeatletter
\let\orig@maketitle\maketitle
\renewcommand{\maketitle}{%
	\begingroup
	\let\@fnsymbol\@arabic
	\def\thanks##1{\footnotemark}
	\orig@maketitle
	\endgroup
	
	\begingroup
	\long\def\@makefntext##1{\parindent 1em\noindent ##1}
	\footnotetext[1]{%
		\begin{tabular}{@{}p{0.49\textwidth}@{\hspace{0.02\textwidth}}p{0.49\textwidth}@{}}
			\textsuperscript{\normalfont 1}chunlinli@zjnu.edu.cn, Zhejiang Normal University &
			\textsuperscript{\normalfont 2}wwwwwj@zjnu.edu.cn, Zhejiang Normal University \\[2pt]
			\textsuperscript{\normalfont 3}wenchuan@scu.edu.cn, Sichuan University &
			\textsuperscript{\normalfont 4}wangchao@nankai.edu.cn, Nankai University \\
		\end{tabular}
		\par\vspace{2pt}
		\noindent\textsuperscript{\normalfont 5}Corresponding author (wang.eric@zjnu.edu.cn, Zhejiang Normal University). Research was supported by National Natural Science Foundation of China NSFC-RGC 12361161603.
		\par\vspace{2pt}
		\noindent\textsuperscript{\normalfont 6}zhufuhai@nju.edu.cn, Nanjing University
	}%
	\endgroup
}
\makeatother

\title{Reduction of integer tiles via CRT and base-$p$ digits}

\author{Chunlin Li\thanks{chunlinli@zjnu.edu.cn, Zhejiang Normal University}, \enspace
	Jie Wu\thanks{wwwwwj@zjnu.edu.cn, Zhejiang Normal University}, \enspace
	Wenchuan Hu\thanks{wenchuan@scu.edu.cn, Sichuan University},  \enspace
	Chao Wang\thanks{wangchao@nankai.edu.cn, Nankai University}, \enspace
	Erxiao Wang\thanks{Corresponding author (wang.eric@zjnu.edu.cn, Zhejiang Normal University).  Research was supported by National Natural Science Foundation of China NSFC-RGC 12361161603.}, \enspace
	Fuhai Zhu\thanks{zhufuhai@nju.edu.cn, Nanjing University}}
\date{}

\begin{document}
\maketitle
\thispagestyle{plain}

\begin{abstract}
Coven and Meyerowitz gave two cyclotomic conditions, (T1) and (T2),
which characterize integer tiles whose cardinalities have at most two
distinct prime factors. We prove that the same characterization holds
without this restriction. The proof uses a reduction in Chinese remainder
coordinates: slicing by the lowest base-$p$ digit produces sets with a
common tiling complement in a group of order smaller by a factor of $p$.
This reduction preserves the cyclotomic data needed for an induction on
the exponents in (T2).
\end{abstract}

\medskip
\textbf{Keywords.} Integer tiling, cyclotomic polynomial, Chinese
remainder theorem (CRT), base-$p$ digit, Fuglede conjecture.

\medskip
 2020 \textit{Mathematics Subject Classification.}
Primary 05B45, 11B75, 20K01; Secondary 11C08, 43A47, 51D20, 52C22.

\section{Introduction}

A finite nonempty set $A\subset\Z$ is an \emph{integer tile} if there is
a set $T\subset\Z$ such that every integer has a unique representation
$a+t$, with $a\in A$ and $t\in T$. We write $A\oplus T=\Z$.
After a translation, we may assume that $\min A=0$ and form the mask
polynomial
\[
 A(x)=\sum_{a\in A}x^a.
\]
Translation does not affect its cyclotomic divisors. Let $\Phi_s(x)$ be
the $s$th cyclotomic polynomial, write $\Phi_s\mid A$ for
$\Phi_s(x)\mid A(x)$, and put
\[
 S_A=\{p^a:p\text{ prime},\ a\ge1,\ \Phi_{p^a}\mid A\}.
\]
The Coven--Meyerowitz conditions are
\begin{align*}
 \textup{(T1)}\quad &|A|=\prod_{s\in S_A}\Phi_s(1),\\
 \textup{(T2)}\quad &s_1,\ldots,s_k\in S_A
   \text{ are powers of distinct primes}
   \ \Longrightarrow\ \Phi_{s_1\cdots s_k}\mid A.
\end{align*}

Newman~\cite{Newman} characterized integer tiles of prime-power
cardinality. In the notation above, his criterion is (T1); under (T1),
condition (T2) is automatic in this case. Coven and
Meyerowitz~\cite[Theorems A, B1, and B2]{CM} proved that (T1) and (T2)
are sufficient for tiling, that (T1) is necessary, and that (T2) is
necessary when $|A|$ has at most two distinct prime divisors.

Every tiling of $\Z$ by translates of a finite set is periodic
\cite[Lemma~1.2]{CM}. Thus, if $A\oplus T=\Z$, there are an
integer $M\ge1$ and a set $B\subset\{0,\ldots,M-1\}$ such that
$T=B+M\Z$ and
\[
A\oplus B=\Z_M,\qquad |A||B|=M,
\]
where $A$ is identified with its image modulo $M$.
Conversely, every such cyclic factorization gives an integer
tiling $A\oplus(B+M\Z)=\Z$. This correspondence allows us to
formulate tiling results in terms of the period $M$.

The subgroup reduction used in the two-prime case does not extend
directly to three primes. \L{}aba and Londner~\cite{LL0} proved that
\[
 A\oplus B=\Z_{p^2q^2r^2},\qquad |A|=|B|=pqr,
\]
implies (T2) for both factors when $p,q,r$ are distinct odd primes;
they later removed the parity restriction~\cite{LL1}.
Together with the reduction for factorizations whose cardinalities
share at most two prime divisors, this establishes (T2) for all tilings
of $\Z_{p^2q^2r^2}$. This is a restriction on the period, rather than
a result for all tiles of cardinality $pqr$.
Their splitting method also gives (T2) for every tiling with either of
the periods
\[
\begin{aligned}
 M&=p_1^{n_1}p_2^{n_2}p_3^{n_3},
 &p_1&>p_2^{n_2-1}p_3^{n_3-1},\\
 M&=p_1^{n_1}p_2^2p_3^2p_4^2,
 &p_1&>p_2p_3p_4,
\end{aligned}
\]
where the primes are distinct and the exponents are
positive~\cite{LL}. Counterexamples to functional versions of these
conditions involve nonnegative functions, rather than set
masks~\cite{KLM}.

We prove the necessity of (T2) for arbitrary finite integer tiles.

\begin{theorem}\label{thm:main}
A finite nonempty set $A\subset\Z$ tiles $\Z$ by translations if and
only if its mask polynomial satisfies \textup{(T1)} and \textup{(T2)}.
\end{theorem}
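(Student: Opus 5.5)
By Coven--Meyerowitz \cite[Theorems A, B1]{CM}, (T1) and (T2) together suffice for tiling, and (T1) is necessary. It therefore remains to show that every tile satisfies (T2). We pass to the cyclic formulation: $A\oplus B=\Z_M$ with $|A||B|=M$. By standard facts, $\Phi_s\mid A$ or $\Phi_s\mid B$ for every $1<s\mid M$. Since $|A||B|=M=\prod_{p^a\| M}p^a$ and (T1) holds for both factors, each prime power $p^a\mid M$ lies in exactly one of $S_A$ and $S_B$. We must show that if $s_1,\dots,s_k\in S_A$ are powers of distinct primes, then $\Phi_{s_1\cdots s_k}\mid A$. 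We argue by induction on $M$, or more precisely on the multiset of exponents appearing in $S_A$, and we prove the statement simultaneously for both factors of every factorization of every smaller group.

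\textbf{Reduction step.} Fix a prime $p\mid M$ and write $M=p^nm$ with $p\nmid m$. In CRT coordinates, $\Z_M\cong\Z_{p^n}\times\Z_m$. Each $a\in A$ has a lowest base-$p$ digit $a\bmod p$, and we slice $A=\bigsqcup_{j=0}^{p-1}A_j$ according to this digit. Suppose first that $p\notin S_A$, which forces $p\in S_B$. We aim to show that each nonempty slice, after subtracting $j$ and dividing the $p$-coordinate by $p$, gives a set $A_j'\subset\Z_{p^{n-1}}\times\Z_m\cong\Z_{M/p}$, and that the $A_j'$ share a common tiling complement $B'$ obtained from $B$. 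Roughly, $B$ is equidistributed modulo $p$ because $\Phi_p\mid B$, and one would take $B'$ to be the digit-$0$ part of $B$ rescaled. Then we need to relate the cyclotomic data. For $s\mid M/p$ and the corresponding $s$ or $ps$ dividing $M$, the relation $A(x)=\sum_j x^jA_j'(x^p)$, read through CRT, should show that $\Phi_{p^{a}t}\mid A$ (with $p\nmid t$, $a\ge 2$) is equivalent to $\Phi_{p^{a-1}t}\mid A_j'$ for all $j$. The case $a\le 1$ would need separate treatment, using the fact that for $p\nmid t$, $\Phi_t(x^p)=\Phi_t(x)\Phi_{pt}(x)$.

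\textbf{Induction.} Applying the induction hypothesis to the tilings $A_j'\oplus B'=\Z_{M/p}$ gives (T2) for each $A_j'$. The prime-power data shift down, $p^a\mapsto p^{a-1}$, and (T2) for the $A_j'$ then lifts to the divisibility $\Phi_{s_1\cdots s_k}\mid A$ whenever the $p$-part of $s_1\cdots s_k$ has exponent at least $2$. The case $p\in S_A$ is handled by applying the same reduction to $B$, or by choosing a different prime. Products in which every prime appears with exponent $1$, or in which $p\in S_A$ for all relevant primes, are covered by cycling through the primes. The base case is $|S_A|$ having prime-power support at a single prime, where (T2) is vacuous.

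\textbf{Main obstacle.} The hard part is making the slicing produce a genuine common complement $B'$ in a group of order $M/p$, and doing so in every configuration. When $p$ divides $|A|$ but $\Phi_p\nmid A$, the slices are not equidistributed, and $\Phi_p$ must be located in $S_A$ at a higher power. The first thing I would try is to choose $p$ and the digit level so that the minimal element of $S_A\cup S_B$ at $p$ belongs to $B$, and then verify that the tiling condition on each fibre of $\Z_{p^n}\times\Z_m\to\Z_p$ is inherited. Ensuring that the cyclotomic data survive the bottom-digit exponents, namely $p^1$ versus $p^0$, is where I expect most of the real work to lie.
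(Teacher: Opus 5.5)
There is a genuine gap. Your reduction step assigns the roles the wrong way round, and it fails in the case you treat first.

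If $\Phi_p\mid B$, the lowest-digit slices of $A$ need not have equal sizes, so they cannot share a complement. They also need not inherit the other prime-power factors of $A$. The paper's example in $\Z_{72}$ shows both. There $P\oplus Q=\Z_{72}$ with $\Phi_3\mid Q$, and the residue classes of $P$ modulo $3$ have sizes $6,3,3$. The class $\{1,25,49\}$ rescales to $\{0,8,16\}\subset\Z_{24}$, whose mask equals $3$ at $x=-1$, although $\Phi_2\mid P$.

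Your candidate $B'$, the digit-$0$ slice of $B$, has size $|B|/p$. It could complement $A_j'$ only if $|A_j'|=|A|$, that is, only when $A$ lies in a single residue class modulo $p$. That is exactly the Coven--Meyerowitz subgroup situation, which does not extend.

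What is true (Theorem~\ref{thm:carrier}) is that every slice $C_q$ of the factor divisible by $\Phi_p$ tiles $\Z_n\times\Z_{p^{K-1}}$ together with a \emph{carrier} $D_\eta=\sum_r y^{\eta r}E_r$. The carrier reassembles \emph{all} slices $E_r$ of the other factor with shifted second coordinates. So when $\Phi_p\mid A$ you should slice $A$; your fallback of reducing $B$ meets the same obstruction. When $\Phi_p\mid B$ you must replace $A$ by a carrier. In that case your $B'$ is the right complement, but for $D_\eta$, not for the individual slices.

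Showing that $D_\eta$ is a set with $D_\eta\oplus C_q=\Z_n\times\Z_{p^{K-1}}$ for every $q$ is the core of the paper, not a fibrewise check. One evaluates the carry identity at $(\xi,\zeta)$, using that $\Q(\xi,\zeta)[t]/(t^p-\zeta)$, or $\Q(\xi)[t]/(\Phi_p(t))$ when $\zeta=1$, is a field. This yields $(y-1)E_rC_q=0$ and the rectangle identity. The $0/1$ coefficients and the binary matrix lemma then make each fibre pattern a single row or a single column of ones. Columns are excluded by showing $[Y_*]^2=0$ in the reduced ring $\F_p[\Z_n]$.

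Two further ingredients are missing.

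First, even given a common complement, $\Phi_{u^b}\mid A$ for a prime $u\ne p$ only yields $\sum_jC_j(\xi,1)=0$. A vanishing sum does not make each summand vanish. The paper's transfer lemma (Lemma~\ref{lem:transfer}) uses the common complement and the prime-power allocation lemma to give all slices the same set $S$ of $u$-power exponents. It then divides out $\prod_{v\in S}\Phi_{u^v}$ and reduces modulo $1-\zeta$, obtaining $pw=0$ in $\F_u$ with $u\nmid w$; this is a contradiction unless $b\in S$.

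Second, consider the carrier case, where $\Phi_p\mid B$ forces the selected exponent $\alpha$ at $p$ to be at least $2$. Take $\ord\xi=d$, $\ord\lambda=p^\alpha$ and $\omega=\lambda^p$.
\begin{enumerate}
 \item The identity $D_\eta(x,1)=A(x,1)$ keeps every $\Phi_{p_i^{\alpha_i}}$.
 \item The vanishing $E_r(1,\omega)=0$ lowers $\Phi_{p^\alpha}$ to $\Phi_{p^{\alpha-1}}$.
 \item One has $D_\eta(\xi,\omega)=f(\omega^\eta)$, where $f(t)=\sum_rE_r(\xi,\omega)t^r$ has degree less than $p$.
\end{enumerate}
Since $\omega^0,\dots,\omega^{p-1}$ are distinct, $\Phi_{p^\alpha d}\nmid A$ makes $f\ne0$, so some $\eta$ carries the failure of (T2) down. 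In your lifting form, (T2) for all $p$ carriers would force $f=0$.

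Your observation that, for $a\ge2$ and $p\nmid d$, $\Phi_{p^ad}\mid A$ if and only if $\Phi_{p^{a-1}d}$ divides every rescaled slice is correct. It follows from the linear independence of $1,\lambda,\dots,\lambda^{p-1}$ over $\Q(\xi,\lambda^p)$. Induction on the exponents is also the right framework. But without the carrier theorem and the transfer lemma, the proposal has no valid reduction step.
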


The main step is a reduction that generalizes the subgroup reduction 
in Lemma 2.5 of \cite{CM}. Write a cyclic period as $np^K$, with $(n,p)=1$, and 
use CRT-coordinates $\Z_n\times\Z_{p^K}$. If $\Phi_p$ divides one factor, 
its slices by the lowest base-$p$ digit have a common complement in
$\Z_n\times\Z_{p^{K-1}}$ (Theorem~\ref{thm:carrier}).
The construction uses the carry in digit addition and the fact that
the coefficients of set masks are zero or one. We then choose a slice
or a complement that preserves a hypothetical failure of (T2), while
decreasing the sum of the selected exponents. Section~\ref{sec:example}
examines both choices for an explicit tiling of \(\mathbb{Z}_{72}\), which may 
help readers become familiar with the reduction procedure and the main constructions used in the proofs.

In retrospect, the significance of base-$p$ digit expansions was already suggested in Newman’s  paper \cite{Newman} in 1977. He remarked that the case of cardinality $6$ probably ``depends simultaneously on the 2-adic and 3-adic expansions'' in the first page. 

There is also a consequence for Fuglede's spectral set
conjecture~\cite{Fuglede}. A finite subset of an abelian group is
\emph{spectral} if the restrictions of some characters form an
orthogonal basis for the functions on that subset. For a measurable
set $F\subset\R$ of finite positive measure, spectrality means that
$\mathrm{L}^2(F)$ has an orthogonal basis of exponential functions.
Combining Theorem~\ref{thm:main} with \L{}aba's spectral construction 
 \cite{Laba} and Dutkay-Lai's \cite[Theorems~1.8(ii)]{DL} gives  
the following ``tiling implies spectrality'' part of Fuglede conjecture. 

\begin{corollary}\label{cor:spectral}
\begin{enumerate}
 \item Every finite tile of $\Z_M$ or $\Z$ is spectral.
 \item Every bounded measurable tile  of $\R$ with finite positive measure is spectral. 
\end{enumerate}
\end{corollary}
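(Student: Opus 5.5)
The corollary follows by combining Theorem~\ref{thm:main} with known implications, so I would argue in three steps.

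\textbf{Tiles of $\Z$.} Let $A\subset\Z$ be finite and tile $\Z$. By Theorem~\ref{thm:main}, $A$ satisfies (T1) and (T2). \L{}aba~\cite{Laba} showed that (T1) and (T2) imply that $A$ is spectral. Her spectrum is built explicitly from the prime powers in $S_A$: it consists of the sums $\sum_{s\in S_A} k_s/s$ with $k_s\in\{0,\ldots,p-1\}$, where $p$ is the prime underlying $s$. Condition (T2) is exactly what makes every difference of two such sums a zero of $A(e^{2\pi i\xi})$. So tiles of $\Z$ are spectral.

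\textbf{Tiles of $\Z_M$.} Let $A\oplus B=\Z_M$. Take the representatives of $A$ in $\{0,\ldots,M-1\}$. Then these representatives tile $\Z$ with complement $B+M\Z$, by the correspondence recalled in the introduction, so by the first step they are spectral in $\Z$. The remaining task is to pass from a spectrum in the circle to one in $\widehat{\Z_M}=\frac1M\Z/\Z$. For this I would check that \L{}aba's spectrum can be taken inside $\frac1M\Z$. Each $s\in S_A$ divides $M$: indeed $\Phi_s\mid A(x)B(x)$ modulo $x^M-1$, and since $A(x)B(x)\equiv (x^M-1)/(x-1)$, this divisibility forces $s\mid M$, $s>1$. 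Hence every frequency in the spectrum has the form $j/M$ with $j\in\Z$. Orthogonality is then the same computation as in $\Z$, since the characters agree on $A$. I would cite \cite[Theorem~1.8(ii)]{DL} as an alternative reference for this transfer.

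\textbf{Tiles of $\R$.} Let $F\subset\R$ be a bounded measurable tile of finite positive measure. By the known structure theory for bounded tiles of $\R$ (Lagarias--Wang, via Kolountzakis--Lagarias), after an affine change of variables $F$ has the form $F=A+[0,1]$ up to null sets, with $A\subset\Z$ a finite tile of $\Z$. Dutkay--Lai~\cite[Theorem~1.8(ii)]{DL} reduce spectrality of $F$ to the statement that every finite integer tile is spectral. That statement is the first step, so $F$ is spectral.

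The only nontrivial input is Theorem~\ref{thm:main}. The one point needing care is the transfer from $\Z$ to $\Z_M$, specifically confirming that the elements of $S_A$ divide $M$ so that the spectrum lies in $\frac1M\Z$.
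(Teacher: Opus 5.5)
Your route is the paper's: Theorem~\ref{thm:main} gives (T1) and (T2), \L{}aba's construction \cite{Laba} gives the spectrum, and \cite[Theorem~1.8(ii)]{DL} passes to $\R$. The $\Z$ and $\Z_M$ steps are correct, with one repair.

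Your stated reason for $s\mid M$ does not work as written. When $s\nmid M$, $\Phi_s$ is a unit in $\Q[x]/(x^M-1)$, so ``$\Phi_s$ divides $A(x)B(x)$ modulo $x^M-1$'' holds vacuously and forces nothing. The fact you need is Lemma~\ref{lem:census}: $S_A\sqcup S_B$ is exactly the set of prime-power divisors of $M$. You can also see it directly. Each $p^\alpha\in S_A$ contributes a factor $p$ to $A(1)=|A|$, so $S_A\cup S_B$ contains at most $v_p(|A|)+v_p(|B|)=v_p(M)$ powers of $p$. The $v_p(M)$ powers $p^\alpha\mid M$ already lie there, so no other power of $p$ can.

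The $\R$ step contains a false assertion. A bounded measurable tile of $\R$ need not be of the form $A+[0,1]$, even after an affine change of variables and up to null sets. For example, $\Omega=[0,\sqrt2-1)\cup[\sqrt2+4,6)$ is a fundamental domain of $\Z$, because its second piece is $[\sqrt2-1,1)+5$; so it tiles $\R$. Its components have lengths $\sqrt2-1$ and $2-\sqrt2$, whose ratio $1/\sqrt2$ is irrational. The components of any affine image of $A+[0,1]$ have commensurable lengths, so $\Omega$ is not of that form.

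The Lagarias--Wang and Kolountzakis--Lagarias theory describes the tiling set, not the tile. Even when the tiling set $T$ lies in $\Z$, a tile is only a union of fibers $x+A_x$ with $0\le x<1$. Each $A_x$ tiles $\Z$ with complement $T$, and the fibers may vary with $x$; in the example, $A_x$ is $\{0\}$ or $\{5\}$. The case $A+[0,1]$ is the one with a constant fiber, where $\Lambda+\Z$ is trivially a spectrum. The real content is a single spectrum that serves all fibers at once, and that is what the Dutkay--Lai reduction supplies.

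In this setting Theorem~\ref{thm:main} even gives the common spectrum directly. By Lemma~\ref{lem:census} and (T1), every fiber has the same set $S_{A_x}$. Hence all fibers share \L{}aba's spectrum $\Lambda$, and $\Lambda+\Z$ is a spectrum of the tile.

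So delete the structural sentence and apply \cite[Theorem~1.8(ii)]{DL} to $F$ directly, as the paper does. With that change your argument is complete.
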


The converse remains open.

\section{Cyclic tilings and algebraic preliminaries}

\subsection{Cyclic realization and cyclotomic factors}

For each positive integer $m$, write $\Z_m=\Z/m\Z$ and
\[
H_m(x)=1+x+\cdots+x^{m-1}.
\]
Masks of subsets of $\Z_M$ are formed using the standard
representatives $0,\ldots,M-1$. For $E,F\subset\Z_M$, the
factorization $E\oplus F=\Z_M$ is equivalent to
\begin{equation}\label{eq:cyclic}
	E(x)F(x)=H_M(x)
	\quad\text{in }\Z[x]/(x^M-1),
\end{equation}
by~\cite[Lemma~1.3]{CM}.

The following formulation follows from the periodicity
theorem~\cite[Lemma~1.2]{CM} by taking a sufficiently large
multiple of a tiling period.

\begin{lemma}[Cyclic realization]\label{lem:period}
	Let $A\subset\Z$ be a finite integer tile. Given positive
	integers $d_1,\ldots,d_t$, there is a common multiple $M$
	of the $d_i$ such that reduction modulo $M$ is injective
	on $A$ and, identifying $A$ with its image,
	\[
	A\oplus B=\Z_M
	\]
	for some $B\subset\Z_M$.
\end{lemma}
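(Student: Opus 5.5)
We plan to derive the statement from the periodicity theorem \cite[Lemma~1.2]{CM}, by passing from a given period to a suitable multiple of it. First, since $A$ is a finite tile, there is a tiling $A\oplus T=\Z$. By \cite[Lemma~1.2]{CM} this tiling is periodic: there is an integer $P\ge1$ with $T+P=T$. Then $T=B_0+P\Z$, where $B_0=T\cap\{0,\ldots,P-1\}$. We will not need $P$ itself to separate the elements of $A$; the multiple chosen below will take care of that.

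Next we choose the modulus. Let $D=\max A-\min A$, which is finite, and set
\[
M = L\cdot P\cdot \operatorname{lcm}(d_1,\ldots,d_t),
\]
where $L\ge1$ is chosen so that $M>D$. Then $M$ is a common multiple of all the $d_i$. Two distinct elements of $A$ differ by a nonzero integer of absolute value at most $D<M$, so they are incongruent modulo $M$. Hence reduction modulo $M$ is injective on $A$.

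Finally we produce the factor $B$. Since $P\mid M$, the set $T$ is also $M$-periodic, so $T=B+M\Z$ with $B=T\cap\{0,\ldots,M-1\}$. We claim that $A\oplus B=\Z_M$, with $A$ identified with its image modulo $M$.

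For existence, take a residue class $r\in\Z_M$ and a representative $z\in\Z$ of it. Write $z=a+\tau$ with $a\in A$ and $\tau\in T$. Then $\tau=b+kM$ for some $b\in B$, so $r\equiv a+b$.

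For uniqueness, suppose $a+b\equiv a'+b'\pmod M$, so that $a+b=a'+b'+kM$ for some integer $k$. Then $a+b=a'+(b'+kM)$, and both $b$ and $b'+kM$ lie in $T$. Uniqueness of representations in $A\oplus T=\Z$ gives $a=a'$ and $b=b'+kM$. Since $b,b'\in\{0,\ldots,M-1\}$, this forces $k=0$, so $b=b'$. Counting gives $|A||B|=M$ as a consistency check.

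There is no real obstacle here. The only care needed is to take $M$ to be a multiple of the period $P$, which keeps $T$ $M$-periodic, and to make $M$ larger than the diameter of $A$, which gives injectivity. Both requirements are met by enlarging $M$ through multiplication by $L$.
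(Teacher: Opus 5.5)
Your proof is correct and follows the paper's own one-line argument, which takes a multiple of a tiling period from the Coven--Meyerowitz periodicity lemma that is also a common multiple of the $d_i$; you have simply written the verification out in full. As an aside, the requirement $M>\max A-\min A$ is not needed: if distinct $a,a'\in A$ were congruent modulo a period $M$ of $T$, then for any $b\in T$ the identity $a+b=a'+(b+a-a')$ would give two different representations in $A\oplus T=\Z$, so injectivity holds for every period of $T$.
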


We also recall the prime-power allocation result of
Coven and Meyerowitz~\cite[Lemma~2.1]{CM}.

\begin{lemma}[Prime-power allocation]\label{lem:census}
	Suppose that $E\oplus F=\Z_M$. Then $S_E$ and $S_F$
	are disjoint, and their union is the set of all
	prime-power divisors of $M$. For each prime $p\mid M$,
	exactly $v_p(|E|) := \mathrm{max} \{ j \in \Z_{\ge 0} : ~ p^j ~ | ~ |E| \} $ of the polynomials
	\[
	\Phi_p,\Phi_{p^2},\ldots,\Phi_{p^{v_p(M)}}
	\]
	divide $E(x)$, and the remaining $v_p(|F|)$ divide $F(x)$.
\end{lemma}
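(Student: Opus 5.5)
The plan is the standard Coven--Meyerowitz argument for Lemma~\ref{lem:census}. We work entirely with the cyclic identity \eqref{eq:cyclic}. Since $E(x)F(x)\equiv H_M(x)\pmod{x^M-1}$, we can write $E(x)F(x)=H_M(x)+(x^M-1)Q(x)$ with $Q\in\Z[x]$.

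First we settle disjointness and covering. Let $s>1$ divide $M$. Then $\Phi_s$ divides $x^M-1$ and also $H_M(x)=(x^M-1)/(x-1)$, so $\Phi_s\mid E(x)F(x)$. Because $\Phi_s$ is irreducible over $\Q$, it divides $E$ or $F$. In particular, every prime-power divisor $p^a$ of $M$ lies in $S_E\cup S_F$.

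Second, we count multiplicities of the factors $\Phi_{p^a}$. Evaluate the identity at a primitive $p^a$-th root of unity, or equivalently use $\Phi_{p^a}(1)=p$. The cleanest route compares $p$-adic valuations of $|E|$ and $|F|$. For a polynomial $P\in\Z[x]$ we have $P(1)=\prod\Phi_s(1)\cdot R(1)$ over the cyclotomic factors dividing $P$, and $\Phi_s(1)=p$ if $s=p^a$, while $\Phi_s(1)=1$ if $s$ has at least two prime factors. Since $E(x)$ is divisible by $\prod_{p^a\in S_E}\Phi_{p^a}$ (distinct irreducibles), evaluating at $1$ gives $\prod_{p^a\in S_E}\Phi_{p^a}(1)\mid |E|$. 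Hence, for each $p$,
\[
\#\{a\ge1:\ p^a\in S_E\}\le v_p(|E|),
\]
and similarly for $F$.

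Third, we combine the two inequalities with $|E||F|=M$. Counting only the exponents $a\le v_p(M)$, the covering step gives
\[
\#\{a\le v_p(M):p^a\in S_E\}+\#\{a\le v_p(M):p^a\in S_F\}\ \ge\ v_p(M)=v_p(|E|)+v_p(|F|).
\]
The upper bounds from the second step force equality everywhere. This equality gives two things: no exponent $a\le v_p(M)$ is counted twice, which is disjointness among divisors of $M$; and exactly $v_p(|E|)$ of $\Phi_p,\dots,\Phi_{p^{v_p(M)}}$ divide $E$. Equality in the upper bound also shows $E$ has no $\Phi_{p^a}$ with $a>v_p(M)$. The same holds for $F$, so $S_E\cup S_F$ is exactly the set of prime-power divisors of $M$ and the two sets are disjoint.

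The main subtlety is that masks are taken with representatives in $\{0,\dots,M-1\}$ while the identity holds only modulo $x^M-1$. This matters only in the first step, and it is harmless there because every $\Phi_s$ with $s\mid M$ divides $x^M-1$.
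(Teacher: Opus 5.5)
Your proof is correct. It is the standard Coven--Meyerowitz counting argument: irreducibility of $\Phi_s$ for $s\mid M$, $s>1$, gives the covering; evaluation at $x=1$ bounds the number of $\Phi_{p^a}$ dividing each mask by $v_p(|E|)$ and $v_p(|F|)$; and $|E||F|=M$ then forces equality, disjointness, and the absence of any $\Phi_{p^a}$ with $a>v_p(M)$, including for primes $p\nmid M$. The paper gives no proof of its own here and simply cites Lemma~2.1 of Coven--Meyerowitz, which rests on this same counting argument.
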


\subsection{Chinese remainder coordinates}

Let $M=np^K$, where $p$ is prime and $(n,p)=1$. Fix
\begin{equation}\label{eq:crt}
 \kappa_K:\Z_M\longrightarrow\Z_n\times\Z_{p^K},
 \qquad a\longmapsto(a\bmod n,a\bmod p^K).
\end{equation}
For $E\subset\Z_n\times\Z_{p^K}$, put
\[
 E(x,t)=\sum_{(h,k)\in E}x^ht^k,\qquad
 E^\flat=\kappa_K^{-1}(E).
\]
Both coordinates in the sum use standard representatives. If
$\xi^n=\lambda^{p^K}=1$, then
\begin{equation}\label{eq:dictionary}
 E(\xi,\lambda)=E^\flat(\xi\lambda).
\end{equation}
Indeed, the two sides agree on each monomial. If $\ord\xi=d$ and
$\ord\lambda=p^a$, their product has order $dp^a$, and hence
\begin{equation}\label{eq:cyclodict}
 E(\xi,\lambda)=0
 \quad\Longleftrightarrow\quad \Phi_{dp^a}\mid E^\flat.
\end{equation}
Here $a=0$ is allowed. This is the minimal-polynomial criterion for
a primitive root of unity. We use the same notation with $K-1$
in place of $K$, including $K-1=0$.

\begin{lemma}[Evaluation at roots of unity]\label{lem:interp}
Let $F\in\C[x,y]/(x^n-1,y^L-1)$ for positive integers $n,L$. If
$F(\xi,\zeta)=0$ whenever $\xi^n=\zeta^L=1$, then $F=0$.
\end{lemma}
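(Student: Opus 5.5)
The plan is to reduce to the one-variable fact that a polynomial of degree less than $L$ vanishing at all $L$th roots of unity is zero, and to do this one variable at a time. Represent $F$ by its unique reduced representative
\[
F(x,y)=\sum_{i=0}^{n-1}\sum_{j=0}^{L-1}c_{ij}x^iy^j,
\]
since the monomials $x^iy^j$ with $0\le i<n$, $0\le j<L$ form a $\C$-basis of the quotient $\C[x,y]/(x^n-1,y^L-1)$. Evaluation at a pair $(\xi,\zeta)$ with $\xi^n=\zeta^L=1$ is well defined on the quotient, because both generators of the ideal vanish there. So it suffices to show that all $c_{ij}$ are zero.

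First fix an $L$th root of unity $\zeta$ and set
\[
G_\zeta(x)=F(x,\zeta)=\sum_{i=0}^{n-1}\Big(\sum_{j}c_{ij}\zeta^j\Big)x^i .
\]
This polynomial has degree less than $n$ and vanishes at all $n$ distinct $n$th roots of unity. Hence it is identically zero, so for each $i$ we get $\sum_j c_{ij}\zeta^j=0$.

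Next fix $i$. The polynomial $\sum_{j=0}^{L-1}c_{ij}y^j$ has degree less than $L$ and vanishes at all $L$ distinct $L$th roots of unity, so all of its coefficients $c_{ij}$ vanish. Alternatively, both steps together say that the two-dimensional discrete Fourier transform on $\Z_n\times\Z_L$ is invertible: its matrix is the Kronecker product of two Vandermonde matrices on distinct nodes, and such a product is nonsingular.

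There is no real obstacle here. The only point needing care is working with the reduced representative, so that the degree bounds $<n$ and $<L$ hold, and checking that evaluation is compatible with the quotient. Both are immediate.
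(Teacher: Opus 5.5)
Your proof is correct and follows essentially the same route as the paper's: take the reduced representative, specialize $y=\zeta$ to kill the $x$-coefficients using the $n$ distinct $n$th roots of unity, and then kill each coefficient polynomial in $y$ using the $L$ distinct $L$th roots of unity. Your remark about the Kronecker product of Vandermonde matrices is a valid equivalent reformulation of the same argument.
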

\begin{proof}
Use the representative with $\deg_x F<n$ and $\deg_y F<L$.
For each $\zeta^L=1$, the polynomial $F(x,\zeta)$ has $n$
distinct zeros and degree less than $n$, so it is zero.
Each coefficient, viewed as a polynomial in $y$, then has $L$
distinct zeros and degree less than $L$, and is also zero.
\end{proof}

\medskip

Let $\varphi(m)$ denote the Euler function. Let $\zeta_p$ denote the primitive $p$-th root of unity.
\begin{lemma}[Cyclotomic extensions]\label{lem:field}
Let $\xi$ have order $m$, with $(m,p)=1$.
\begin{enumerate}
 \item If $\ord\zeta=p^v$ and $v\ge1$, then $t^p-\zeta$
 is irreducible over $\Q(\xi,\zeta)$.
 \item The polynomial $\Phi_p(t)$ is irreducible over $\Q(\xi)$.
\end{enumerate}
\end{lemma}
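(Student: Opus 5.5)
Both parts reduce to degree computations for cyclotomic fields, using that $\Q(\xi)\cap\Q(\zeta)=\Q$ when the orders are coprime.

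For (ii), set $F=\Q(\xi)$ with $\ord\xi=m$ and $(m,p)=1$. Then $F(\zeta_p)=\Q(\zeta_{mp})$, since $\xi\zeta_p$ has order $mp$ and generates both fields. Its degree over $\Q$ is $\varphi(mp)=\varphi(m)(p-1)$. Therefore
\[
[F(\zeta_p):F]=\frac{\varphi(mp)}{\varphi(m)}=p-1=\deg\Phi_p .
\]
Because $\zeta_p$ is a root of $\Phi_p$ and its degree over $F$ equals $\deg\Phi_p$, the polynomial $\Phi_p$ is its minimal polynomial over $F$, hence irreducible over $\Q(\xi)$.

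For (i), put $F=\Q(\xi,\zeta)$. The element $\xi\zeta$ has order $mp^v$ and generates $F$, so $F=\Q(\zeta_{mp^v})$ with $[F:\Q]=\varphi(m)\varphi(p^v)$. Choose a root $\eta$ of $t^p-\zeta$. Then $\eta^{p^{v+1}}=\zeta^{p^v}=1$, while $\eta^{p^v}=\zeta^{p^{v-1}}\neq1$ because $v\ge1$. Hence $\ord\eta=p^{v+1}$.

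The field $F(\eta)$ contains $\xi$ and $\eta$, and $\eta^p=\zeta$, so $F(\eta)=\Q(\xi,\eta)=\Q(\zeta_{mp^{v+1}})$. Its degree over $\Q$ is $\varphi(m)\varphi(p^{v+1})$. Since $v\ge1$, we have $\varphi(p^{v+1})/\varphi(p^v)=p$, and so $[F(\eta):F]=p$. Thus the minimal polynomial of $\eta$ over $F$ has degree $p$ and divides $t^p-\zeta$, so the two coincide and $t^p-\zeta$ is irreducible over $F$.

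The only place needing care is the degree ratio in (i). The hypothesis $v\ge1$ is essential: for $v=0$ we would have $\zeta=1$, the ratio $\varphi(p)/\varphi(1)$ equals $p-1$, and indeed $t^p-1$ is reducible. Both parts use only $[\Q(\zeta_N):\Q]=\varphi(N)$, the irreducibility of cyclotomic polynomials, and multiplicativity of $\varphi$ on coprime arguments.
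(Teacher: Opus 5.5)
Your proof is correct and is essentially the paper's argument: both parts reduce to the cyclotomic degree ratios $\varphi(mp^{v+1})/\varphi(mp^v)=p$ and $\varphi(mp)/\varphi(m)=p-1$, after noting that a root of $t^p-\zeta$ has order $p^{v+1}$. You spell out steps the paper leaves implicit, namely why that order is exactly $p^{v+1}$ and why each compositum is a single cyclotomic field $\Q(\zeta_N)$; the fact $\Q(\xi)\cap\Q(\zeta)=\Q$ announced in your first line is never actually used.
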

\begin{proof}
A root $\rho$ of $t^p-\zeta$ has order $p^{v+1}$. The
cyclotomic degree formula~\cite[Lemma~5.9 and Theorem~5.10]{Milne} gives
\[
 [\Q(\xi,\rho):\Q(\xi,\zeta)]
 =\frac{\varphi(mp^{v+1})}{\varphi(mp^v)}=p,
\]
so $t^p-\zeta$ is its minimal polynomial. Likewise,
$[\Q(\xi,\zeta_p):\Q(\xi)]=\varphi(mp)/\varphi(m)=p-1$,
which proves the second assertion.
\end{proof}

\medskip

 Let $\mathbb{F}_p := \mathbb{Z}/p\mathbb{Z}$ denote the finite field with $p$ elements.

\begin{lemma}\label{lem:reduced}
If $(n,p)=1$, the ring $\F_p[x]/(x^n-1)$ has no nonzero
nilpotent elements.
\end{lemma}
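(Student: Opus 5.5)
The plan is to show that the polynomial $x^n-1$ is separable over $\F_p$, and then use the Chinese remainder theorem to split the ring into a product of fields. Fields have no nonzero nilpotents, and hence neither does a finite product of fields.

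\emph{Separability.} The formal derivative of $x^n-1$ is $nx^{n-1}$. Since $(n,p)=1$, the element $n$ is a unit in $\F_p$. Any common root of $x^n-1$ and $nx^{n-1}$ in an algebraic closure would have to be $0$, but $0$ is not a root of $x^n-1$. Hence $\gcd(x^n-1,nx^{n-1})=1$, and $x^n-1$ is squarefree in $\F_p[x]$.

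\emph{Splitting.} Factor $x^n-1=f_1\cdots f_r$ into distinct monic irreducibles, which are pairwise coprime. The Chinese remainder theorem then gives
\[
\F_p[x]/(x^n-1)\;\cong\;\prod_{i=1}^r \F_p[x]/(f_i),
\]
and each factor on the right is a field.

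\emph{Conclusion.} Let $g$ be nilpotent. Its image in each field factor is nilpotent, hence zero, so $g=0$. As an alternative avoiding CRT: if $g^m\equiv 0 \pmod{x^n-1}$, then each irreducible $f_i$ divides $g^m$, so $f_i\mid g$. Because the $f_i$ are distinct, their product $x^n-1$ divides $g$.

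None of these steps is a real obstacle. The only point requiring care is the squarefreeness, and that follows directly from $p\nmid n$.
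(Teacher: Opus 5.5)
Your proof is correct, but it takes a different route from the paper's. The paper never factors $x^n-1$. It observes that in $\F_p[x]/(x^n-1)$ the Frobenius map sends $\sum_h a_hx^h$ to $\sum_h a_hx^{ph}$. Since $h\mapsto ph$ permutes $\Z_n$ when $(n,p)=1$, this map is injective, and so are its iterates. If $g^m=0$, choose $p^k\ge m$; then $g^{p^k}=0$, hence $g=0$.

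The paper's argument is shorter and works directly with coefficients. It needs neither the derivative test nor the Chinese remainder theorem, but it depends essentially on characteristic $p$.

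Your separability-plus-CRT route gives more. It shows the ring is a finite product of finite fields, which is the semisimplicity of $\F_p[\Z_n]$ (Maschke's theorem in this case). It also works unchanged over any field in which $n$ is invertible, including characteristic zero.

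The paper uses the lemma only once: to deduce $[Y_*]=0$ from $[Y_*]^2=0$ in $\F_p[\Z_n]$. Either version is enough for that.
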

\begin{proof}
In this ring,
\[
 \left(\sum_{h=0}^{n-1}a_hx^h\right)^p
 =\sum_{h=0}^{n-1}a_hx^{ph}.
\]
Multiplication by $p$ permutes $\Z_n$, so the Frobenius map
is injective. Its iterates are injective as well, excluding
nonzero nilpotents.
\end{proof}

\section{Digit slices and a common complement}

Fix a prime $p$ and positive integers $n,K$ with $(n,p)=1$, and write
\[
 L=p^{K-1},\qquad
 \widetilde G=\Z_n\times\Z_{p^K},\qquad
 G=\Z_n\times\Z_L.
\]
Throughout this section, suppose that $P\oplus Q=\widetilde G$ and
\begin{equation}\label{eq:lowowner}
 Q(1,\zeta_p)=0,
\end{equation}
where $\zeta_p$ is a primitive $p$th root of unity.

\subsection{Products of slices}

Writing the second coordinate as $r+pj$, define
\begin{align}
 E_r&=\{(h,j)\in G:(h,r+pj)\in P\},\label{eq:Eslice}\\
 C_q&=\{(h,j)\in G:(h,q+pj)\in Q\}.\label{eq:Cslice}
\end{align}
The indices $r,q$ range over $0,\ldots,p-1$. In
$R=\Z[x,y]/(x^n-1,y^L-1)$, set
\[
 U_{r,q}=E_rC_q,\qquad W=H_n(x)H_L(y).
\]
For $\eps(r,q)=\lfloor(r+q)/p\rfloor$, the tiling identity becomes
\begin{equation}\label{eq:carry}
 \sum_{\substack{0\le r,q<p\\r+q\equiv c\pmod p}}
 y^{\eps(r,q)}U_{r,q}=W\qquad(0\le c<p).
\end{equation}
The power of $y$ records the carry from the lowest digit.
Also,~\eqref{eq:lowowner} implies that
$\sum_q|C_q|t^q$ is a multiple of $\Phi_p(t)$, and thus
\begin{equation}\label{eq:equalmass}
 |C_0|=\cdots=|C_{p-1}|=:b>0.
\end{equation}

\begin{lemma}\label{lem:rigid}
Every standard coefficient of $U_{r,q}$ is zero or one, and
\begin{align}
 (y-1)U_{r,q}&=0,\label{eq:periodic}\\
 U_{r,q}+U_{r',q'}-U_{r,q'}-U_{r',q}&=0\label{eq:rectangle}
\end{align}
for all indices.
\end{lemma}
\begin{proof}
Fix roots of unity $\xi,\zeta$ satisfying $\xi^n=\zeta^L=1$, and put
\[
 a_r=E_r(\xi,\zeta),\quad b_q=C_q(\xi,\zeta),\qquad
 A_{\xi,\zeta}(t)=\sum_ra_rt^r,\quad
 B_{\xi,\zeta}(t)=\sum_qb_qt^q.
\]
If $(\xi,\zeta)\ne(1,1)$, evaluating~\eqref{eq:carry} gives
\begin{equation}\label{eq:productquotient}
 A_{\xi,\zeta}(t)B_{\xi,\zeta}(t)=0
 \quad\text{in }\Q(\xi,\zeta)[t]/(t^p-\zeta).
\end{equation}
Reduction modulo $t^p-\zeta$ produces exactly the carry factors
$\zeta^{\eps(r,q)}$.

When $\zeta\ne1$, this quotient is a field by
Lemma~\ref{lem:field}. Since both polynomials have degree less
than $p$, one of them is identically zero. When $\zeta=1$
and $\xi\ne1$, reduce instead modulo $\Phi_p(t)$.
Again the quotient is a field, so one polynomial is a scalar
multiple of $\Phi_p(t)$ and has all coefficients equal.
At $(1,1)$ the $b_q$ are equal by~\eqref{eq:equalmass}.
It follows in every case that
\[
 (\zeta-1)a_rb_q=0,\qquad
 (a_r-a_{r'})(b_q-b_{q'})=0.
\]
Lemma~\ref{lem:interp} now gives~\eqref{eq:periodic}
and~\eqref{eq:rectangle}.

Finally, $y^{\eps(r,q)}U_{r,q}$ is a summand of~\eqref{eq:carry}.
All summands have nonnegative integer coefficients, while every
coefficient of $W$ is one. Thus the coefficients of $U_{r,q}$
are zero or one.
\end{proof}

\begin{lemma}[A binary matrix lemma]\label{lem:binary}
Suppose that a matrix $(u_{r,q})$ with entries in $\{0,1\}$ satisfies
\[
 u_{r,q}+u_{r',q'}=u_{r,q'}+u_{r',q}
\]
for all indices. Either each row is constant or each column is
constant. If $(u_{r,q})$ is $p\times p$ and
\[
 \sum_{r=0}^{p-1}u_{r,c-r}=1\qquad(c\in\Z_p),
\]
with column indices read modulo $p$, then it consists of a single
row of ones or a single column of ones, with zeros elsewhere.
\end{lemma}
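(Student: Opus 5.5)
The rectangle identity says that $u_{r,q}-u_{r',q}$ does not depend on $q$. So I will fix a base column $q_0$ and write $u_{r,q}=\alpha_r+\beta_q$ with $\alpha_r=u_{r,q_0}$ and $\beta_q=u_{0,q}-u_{0,q_0}$. Then every row difference $\alpha_r-\alpha_{r'}$ is a constant vector.

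For the first assertion, suppose some row is not constant. Then there are columns $q,q'$ with $u_{r,q}=1$ and $u_{r,q'}=0$, so $\beta_q-\beta_{q'}=1$. For any other row $r'$ we have
\[
 u_{r',q}=u_{r',q'}+1 .
\]
Since entries lie in $\{0,1\}$, this forces $u_{r',q}=1$ and $u_{r',q'}=0$ for every $r'$.

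Now take any pair of rows $r,r'$. The difference $u_{r,\cdot}-u_{r',\cdot}$ is a constant $\delta$. Evaluating at column $q$ gives $\delta=1-1=0$, so all rows are equal. Hence every column is constant. Symmetrically, if some column is not constant then all columns are equal and every row is constant. This gives the dichotomy.

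For the second assertion, first treat the case where each row is constant, $u_{r,q}=\alpha_r\in\{0,1\}$. The diagonal condition for any $c$ reads $\sum_r\alpha_r=1$. So exactly one $\alpha_r$ equals $1$, and the matrix is a single row of ones with zeros elsewhere.

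If instead each column is constant, $u_{r,q}=\beta_q$, then as $r$ runs over $\Z_p$ the index $c-r$ also runs over all of $\Z_p$. The condition becomes $\sum_q\beta_q=1$, giving a single column of ones.

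The one point needing care is that "each row constant" and "each column constant" may both hold, but then the matrix is constant. Its value $u$ must satisfy $pu=1$, which is impossible. So exactly one of the two alternatives applies, and the conclusion follows.

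There is no real obstacle here. The only step needing attention is the $\{0,1\}$-forcing argument that turns the additive decomposition into the row/column dichotomy.
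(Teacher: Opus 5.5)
Your proof is correct and is essentially the paper's argument: both rest on the constancy of row differences together with the $\{0,1\}$ forcing, and the only change is that you split into cases as ``some row is nonconstant, hence all rows coincide'' rather than the paper's equivalent ``two rows differ, hence all rows are constant,'' with the same counting of the diagonal sums at the end. Your closing remark about both alternatives holding at once is harmless but unnecessary, since each case already yields the stated conclusion.
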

\begin{proof}
The difference between two rows is constant. If two rows differ,
one is all zeros and the other all ones; comparison with the zero
row makes every row constant. Otherwise all rows are identical,
so each column is constant. The additional sums count the rows
or columns of ones and force that count to be one.
\end{proof}

\subsection{The common-complement construction}

\begin{theorem}\label{thm:carrier}
For each integer $\eta$, the group-ring element
\begin{equation}\label{eq:carrier}
 F_\eta(x,y)=\sum_{r=0}^{p-1}y^{\eta r}E_r(x,y)
\end{equation}
is the mask of a subset $D_\eta\subset G$, and
\begin{equation}\label{eq:common}
 D_\eta\oplus C_q=G\qquad(0\le q<p).
\end{equation}
The translated slices in~\eqref{eq:carrier} are therefore pairwise
disjoint. Moreover,
\begin{equation}\label{eq:external}
 D_\eta(x,1)=P(x,1),\qquad Q(x,1)=\sum_qC_q(x,1)
\end{equation}
in $\Z[x]/(x^n-1)$. These conclusions include $K=1$.
\end{theorem}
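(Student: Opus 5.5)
The plan is to work in $R$ and prove the identity $F_\eta C_q=W$ for every $q$ and every integer $\eta$. The subset claims then follow from coefficient counting. The key input is Lemma~\ref{lem:rigid} together with Lemma~\ref{lem:binary}. By~\eqref{eq:periodic}, each $U_{r,q}$ is annihilated by $y-1$, so $y^{m}U_{r,q}=U_{r,q}$ for every integer $m$. Hence
\[
F_\eta C_q=\sum_{r}y^{\eta r}U_{r,q}=\sum_r U_{r,q}.
\]
This is independent of $\eta$, so it suffices to show $\sum_r U_{r,q}=W$ for each $q$.

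To get this, I would look at each fixed monomial $x^hy^j$ and set $u_{r,q}$ equal to its coefficient in $U_{r,q}$. By Lemma~\ref{lem:rigid}, $u_{r,q}\in\{0,1\}$ and the rectangle relation~\eqref{eq:rectangle} holds coefficientwise. Applying~\eqref{eq:periodic} again lets us drop the carry factor in~\eqref{eq:carry}: the coefficient of $x^hy^j$ in $y^{\eps}U_{r,q}$ equals $u_{r,q}$. So~\eqref{eq:carry} reads $\sum_{r+q\equiv c}u_{r,q}=1$ for every $c$. Lemma~\ref{lem:binary} then says the matrix is a single row of ones or a single column of ones.

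The column case is the obstacle, since we need every column sum $\sum_r u_{r,q}$ to equal $1$. I will rule it out by mass. Evaluating at $x=y=1$ gives $\sum_{h,j}\sum_r u_{r,q}=|C_q|\sum_r|E_r|=b|P|$, and this is the same for all $q$ by~\eqref{eq:equalmass}. Summing over $q$ gives $pb|P|=|P||Q|=p|G|$, so $b|P|=|G|$. I also need this per monomial, not only in total. Each monomial has total matrix sum $\sum_{r,q}u_{r,q}$ equal to $p$ in the row case and $p$ in the column case; summing over monomials gives $\sum_{r,q}=p|G|$, which is consistent and gives nothing new. Better: if a monomial is of column type with column $q_0$, then $\sum_r u_{r,q}=p$ for $q=q_0$ and $0$ otherwise. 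So it suffices to show every column sum is at most $1$, that is, that the $U_{r,q}$ for fixed $q$ have disjoint supports.

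For that disjointness, I would use that $E_r$ and $E_{r'}$ come from disjoint parts of $P$, and that $P\oplus Q$ is a direct sum. Suppose $x^hy^j$ appears in both $U_{r,q}$ and $U_{r',q}$, with $r\ne r'$. Then $(h,j)=e+c=e'+c'$ with $e\in E_r$, $e'\in E_{r'}$, and $c,c'\in C_q$. Lifting to $\widetilde G$ gives $(e_h,r+pe_j)+(c_h,q+pc_j)$ and $(e'_h,r'+pe'_j)+(c'_h,q+pc'_j)$. Their lowest digits $r+q$ and $r'+q$ differ mod $p$, so the lifts are distinct and no contradiction arises directly. Instead I would use~\eqref{eq:carry} summed over $c$: $\sum_{r,q}U_{r,q}=pW$ up to carries, and I would combine this with the equal-mass argument per $\xi$. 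Concretely, evaluate at each $(\xi,\zeta)\ne(1,1)$. From the proof of Lemma~\ref{lem:rigid}, either $A_{\xi,\zeta}\equiv0$, so $\sum_r a_r b_q=0$, or $B_{\xi,\zeta}$ has all $b_q$ equal with $\sum_q b_q t^q\equiv0$ (for $\zeta\ne1$, $B$ itself is $0$). In the remaining case $\zeta=1$, $b_q=b'$ for all $q$ and $A(t)\Phi_p$-multiple… I expect this case analysis to give $\sum_r a_rb_q=W(\xi,\zeta)$ for all $(\xi,\zeta)$, including $(1,1)$ by mass. Lemma~\ref{lem:interp} would then give $\sum_rU_{r,q}=W$ directly. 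This root-of-unity case check, in particular the $\zeta=1$, $\xi\ne1$ case where I must show $\sum_r a_r=0$ or $b_q=0$, is the step I expect to need care.

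Given $F_\eta C_q=W$ with nonnegative coefficients, and since $F_\eta$ has nonnegative coefficients, $F_\eta(1,1)=|G|/b$ and every coefficient of $F_\eta$ is bounded by $1$ (because $C_q\ne\emptyset$). So $F_\eta$ is the mask of a set $D_\eta$, and~\eqref{eq:common} holds; this also yields the pairwise disjointness of the translated slices. Finally, setting $y=1$ gives $D_\eta(x,1)=\sum_rE_r(x,1)=P(x,1)$ and $Q(x,1)=\sum_qC_q(x,1)$, since the second coordinate is forgotten. Nothing above used $K\ge2$: when $L=1$, $y=1$ throughout.
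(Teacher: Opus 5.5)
\textbf{There is a genuine gap: you never exclude the column case.} Everything up to the step you flag matches the paper: dropping the phases via \eqref{eq:periodic}, the per-monomial $0/1$ matrices, Lemma~\ref{lem:binary}, the reduction to ruling out column-type entries, and the closing coefficient count. The root-of-unity check you propose cannot rule it out.

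At $\zeta=1$, $\xi\ne1$, the evaluated identity \eqref{eq:carry} gives only $\sum_{r+q\equiv c}a_rb_q=0$ for each $c$. Consider the sub-case where the field argument yields $a_0=\cdots=a_{p-1}=\alpha$, rather than equal $b_q$. Then this says only $\alpha\sum_qb_q=0$, whereas you need $p\alpha b_q=0$ for every $q$. Equivalently, whenever $P(\xi,1)\neq0$ you must pass from $Q(\xi,1)=\sum_qC_q(\xi,1)=0$ to $C_q(\xi,1)=0$ for each $q$. Nothing available at the single point $(\xi,1)$ forces this. Lemma~\ref{lem:transfer} does not help either: it presupposes the common complement you are trying to construct, and it only treats prime-power orders.

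\textbf{The missing idea is a global mod-$p$ argument, which uses $(n,p)=1$ a second time.} Let $X_r$ (resp.\ $Y_q$) be the set of $h$ whose matrix is row $r$ (resp.\ column $q$) of ones. Put $f_r=[X_r]$, $g_q=[Y_q]$, $X_*=\bigsqcup_rX_r$ and $Y_*=\bigsqcup_qY_q$.
\begin{enumerate}
\item Periodicity in $y$ gives $U_{r,q}=H_L(y)(f_r+g_q)$. Setting $y=1$ gives $E_r(x,1)C_q(x,1)=L(f_r+g_q)$ in $\Z[\Z_n]$.
\item Since $(E_rC_q)(E_{r'}C_{q'})=(E_rC_{q'})(E_{r'}C_q)$, cancelling $L^2$ yields $(f_r-f_{r'})(g_{q'}-g_q)=0$.
\item Summing $f_rg_q=f_rg_0+f_0g_q-f_0g_0$ over all $r,q$ shows $[X_*][Y_*]\equiv0$ in $\F_p[\Z_n]$.
\item Your mass count, carried one step further, gives $|Y_*|=p|Y_q|$, hence $H_n[Y_*]\equiv0$.
\item Since $[X_*]+[Y_*]=H_n$, it follows that $[Y_*]^2\equiv0$.
\item Lemma~\ref{lem:reduced} then gives $[Y_*]\equiv0\pmod p$, and $0/1$ coefficients force $Y_*=\varnothing$.
\end{enumerate}
Without this step, or a substitute for it, the identity $\sum_rU_{r,q}=W$ and hence \eqref{eq:common} remain unproved.
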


We call $D_\eta$ a \emph{carrier} of the slices $E_r$.

\begin{proof}
By~\eqref{eq:periodic}, the coefficients of $U_{r,q}$ are
constant in the second coordinate. Hence
\[
 U_{r,q}=H_L(y)\sum_{h=0}^{n-1}u_{r,q}(h)x^h,
 \qquad u_{r,q}(h)\in\{0,1\}.
\]
Since $yU_{r,q}=U_{r,q}$, the carry factors in~\eqref{eq:carry}
can be omitted, giving
$\sum_ru_{r,c-r}(h)=1$. Equation~\eqref{eq:rectangle} and
Lemma~\ref{lem:binary} show that, for each $h$, the matrix
$(u_{r,q}(h))$ has one row of ones or one column of ones.
Let $X_r$ be the set of $h$ for which the row has index $r$,
and $Y_q$ the set for which the column has index $q$. Thus
\begin{equation}\label{eq:partition}
 \Z_n=\left(\bigsqcup_rX_r\right)\sqcup
       \left(\bigsqcup_qY_q\right).
\end{equation}
Writing $f_r=[X_r]$ and $g_q=[Y_q]$ for their masks, we have
\begin{equation}\label{eq:rowcolumn}
 U_{r,q}=H_L(y)(f_r+g_q).
\end{equation}

We claim that all the $Y_q$ are empty. Evaluating
\eqref{eq:rowcolumn} at $(1,1)$ gives
\[
 |E_r|b=L(|X_r|+|Y_q|).
\]
Thus the $Y_q$ have equal cardinalities. Put
$X_*=\bigsqcup_rX_r$ and $Y_*=\bigsqcup_qY_q$; in particular,
$p\mid|Y_*|$.
Now set $e_r=E_r(x,1)$ and $c_q=C_q(x,1)$. Then
\begin{equation}\label{eq:projected}
 e_rc_q=L(f_r+g_q)
\end{equation}
in $\Z[\Z_n]$. Substitution into
$(e_rc_q)(e_{r'}c_{q'})=(e_rc_{q'})(e_{r'}c_q)$ yields
\begin{equation}\label{eq:rankidentity}
 (f_r-f_{r'})(g_{q'}-g_q)=0.
\end{equation}
Here the integer factor $L^2$ can be cancelled because
$\Z[\Z_n]$ is torsion-free as an abelian group.

For $N_{r,q}=f_rg_q$, equation~\eqref{eq:rankidentity} reads
$N_{r,q}=N_{r,0}+N_{0,q}-N_{0,0}$. Summing gives
\[
 [X_*][Y_*]
 =p\sum_rN_{r,0}+p\sum_qN_{0,q}-p^2N_{0,0},
\]
and hence
\begin{equation}\label{eq:XYzero}
 [X_*][Y_*]=0\quad\text{in }\F_p[\Z_n].
\end{equation}
On the other hand, $[X_*]+[Y_*]=H_n$ and
$H_n[Y_*]=|Y_*|H_n=0$ in this ring. Therefore $[Y_*]^2=0$.
Lemma~\ref{lem:reduced} implies $[Y_*]=0$ in $\F_p[\Z_n]$.
As the integer coefficients of $[Y_*]$ are zero or one,
this forces $Y_*=\varnothing$. We have proved
\begin{equation}\label{eq:rowonly}
 U_{r,q}=H_L(y)f_r,\qquad \Z_n=\bigsqcup_rX_r.
\end{equation}

It follows that
\[
 F_\eta C_q
 =\sum_ry^{\eta r}U_{r,q}
 =H_L(y)\sum_rf_r=W.
\]
The coefficients of $F_\eta$ are nonnegative integers, and
$C_q$ is nonempty. A coefficient of $F_\eta$ greater than one
would give a coefficient of $F_\eta C_q$ greater than one.
Thus $F_\eta$ is a set mask, proving~\eqref{eq:common} and the
disjointness assertion. Setting $y=1$ gives~\eqref{eq:external}.
For $K=1$ we have $L=1$ and $y=1$ in $R$; the same proof
applies, with no evaluation points having $\zeta\ne1$.
\end{proof}

\section{Cyclotomic factors under reduction}

\subsection{A transfer lemma}

Common complements allow us to pass a prime-power cyclotomic
divisor from a sum of masks to each summand.

\begin{lemma}\label{lem:transfer}
Let $u\ne p$ be primes, and suppose that
$D\oplus C_j=\Z_N$ for $0\le j<p$. If $a\ge1$, $\zeta$ has
order $u^a\mid N$, and
\begin{equation}\label{eq:aggregate}
 \sum_{j=0}^{p-1}\zeta^{t_j}C_j(\zeta)=0
\end{equation}
for some integers $t_j$, then $\Phi_{u^a}\mid C_j$ for every $j$.
\end{lemma}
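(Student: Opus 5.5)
The plan is to apply Lemma~\ref{lem:census} separately to each factorization $D\oplus C_j=\Z_N$, and then use the relation \eqref{eq:aggregate} to exclude the one configuration that is not immediately favourable. Since $u^a\mid N$, the census lemma says that for each $j$ exactly one of $D$ and $C_j$ is divisible by $\Phi_{u^a}$. The key point is that $D$ is the same for every $j$, so the alternative is uniform in $j$.
\begin{itemize}
\item[(a)] If $\Phi_{u^a}\nmid D$, then $\Phi_{u^a}\mid C_j$ for every $j$, and the lemma holds.
\item[(b)] If $\Phi_{u^a}\mid D$, then $C_j(\zeta)\ne0$ for every $j$.
\end{itemize}
It remains to show that \eqref{eq:aggregate} cannot hold in case~(b). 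In that case the census lemma also gives more structure. For every $b\le v_u(N)$, whether $\Phi_{u^b}$ divides $C_j$ is decided by $D$ alone. Hence all the $C_j$ have the same set $T$ of $u$-power cyclotomic divisors, with $a\notin T$, and the same cardinality $N/|D|$.

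To reach a contradiction in case~(b), I will work in $\Z[\zeta]$. There $\pi=1-\zeta$ generates the unique prime above $u$, and $p$ is a unit because $u\ne p$. As a first check, every $C_j(\zeta)\equiv |C_j|\pmod{\pi}$, so reducing \eqref{eq:aggregate} modulo $\pi$ gives $p\,|C_j|\equiv0$ in $\F_u$, and hence $u\mid |C_j|$. This is consistent with case~(b) but not contradictory, so it has to be refined. The refinement I would try is to compare the $C_j(\zeta)$ more precisely, not just modulo $\pi$, using the fact that they all have the same $u$-power divisor set $T$.

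Concretely, I would reduce each $C_j$ modulo $x^{u^a}-1$ and factor out $\prod_{b\in T,\,b<a}\Phi_{u^b}(x)$. Dividing by this common factor is the standard Coven--Meyerowitz manipulation for prime-power divisors. The aim is to show that every quotient $C_j(\zeta)/\prod_{b\in T,\,b<a}\Phi_{u^b}(\zeta)$ is congruent to one and the same element $w\not\equiv0$ modulo $\pi$, up to the unit factors $\zeta^{t_j}\equiv1$. The value of $w$ should be determined by $D$, via the counting identity $|D|\,|C_j|=N$ restricted to residue classes modulo $u^a$.

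With this in hand, \eqref{eq:aggregate} divided by the common factor would give $p\,w\equiv0\pmod\pi$. That is impossible, since both $p$ and $w$ are units modulo $\pi$. This would rule out case~(b).

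The main obstacle is this congruence step: showing that the leading $\pi$-adic terms of the $C_j(\zeta)$ agree across $j$. I would first try to prove it by counting $C_j$ in residue classes modulo $u^{b}$ for $b\le a$. Those counts are governed by which $\Phi_{u^b}$ divide $C_j$, and that information is common to all $j$ by the census lemma.

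If this approach fails, the fallback is to use the evaluation-based structure of Theorem~\ref{thm:carrier}, where the $C_j$ come from a single $Q$. There \eqref{eq:aggregate} is the value $Q^\flat$ at a root of unity of order $u^a$, and one can try to invoke Lemma~\ref{lem:census} for the tiling $P\oplus Q$ instead.
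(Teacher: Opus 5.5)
\textbf{There is a gap in the key step.} Your overall plan is the paper's. Lemma~\ref{lem:census} makes the set $T$ of $u$-power cyclotomic divisors, and the cardinality $c=|C_j|$, independent of $j$. The contradiction then comes from cancelling a common nonzero factor in \eqref{eq:aggregate} and reducing modulo $\pi=1-\zeta$. The gap is in which factor you cancel. You reduce modulo $x^{u^a}-1$ and remove only the $\Phi_{u^b}$ with $b\in T$, $b<a$. The factors with $b\in T$, $b>a$, are then still present in the value. They do not vanish at $\zeta$; instead $\Phi_{u^b}(\zeta)=\Phi_u(\zeta^{u^{b-1}})=\Phi_u(1)=u$. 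So your quotient is $u^m$ times an algebraic integer, where $m=|\{b\in T:b>a\}|$. Your $w\equiv c/u^{|\{b\in T:\,b<a\}|}$ is therefore divisible by $u$ whenever $m\ge1$, and $pw\equiv0\pmod{\pi}$ gives nothing. For instance, take $N=4$, $u=2$, $a=1$, $D=\{0,1\}$ and $C_j=\{0,2\}$. You are in case~(b) with $T=\{2\}$, the product over $b<a$ is empty, and $C_j(-1)=2\equiv0\pmod{\pi}$.

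\textbf{The step you flag is automatic; the real requirement is that $w$ be a unit.} The step you call the main obstacle needs no counting in residue classes. For any $R\in\Z[x]$ one has $R(\zeta)\equiv R(1)\pmod{\pi}$. So the residues of the quotients agree across $j$ once their values at $1$ do, and those values are determined by $c$ and $T$. For $w$ to be a unit you must remove all $u$-power cyclotomic factors:
\begin{enumerate}
\item Write $C_j=H_*R_j$ in $\Z[x]$ with $H_*=\prod_{b\in T}\Phi_{u^b}$, without reducing modulo $x^{u^a}-1$.
\item Then $H_*(\zeta)\ne0$ because $a\notin T$.
\item Also $R_j(1)=c/u^{|T|}$ is prime to $u$, because $|T|=v_u(c)$.
\item Cancelling $H_*(\zeta)$ and reducing modulo $\pi$ gives $pw\equiv0$ in $\F_u$ with $u\nmid pw$, a contradiction.
\end{enumerate}
This is exactly the paper's proof. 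Within your own setup, the same repair amounts to also dividing by the common factor $u^m$. The fallback via Theorem~\ref{thm:carrier} is unnecessary.
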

\begin{proof}
All $C_j$ have cardinality $c=N/|D|$. By
Lemma~\ref{lem:census}, the set
\[
 S=\{v \in \Z_{\ge 1}: ~\Phi_{u^v}\mid C_j\}
\]
is independent of $j$ and has size $v_u(c)$.
Suppose that $a\notin S$. Write
\[
 H_*(x)=\prod_{v\in S}\Phi_{u^v}(x),\qquad
 C_j(x)=H_*(x)R_j(x),\quad R_j\in\Z[x].
\]
Then $H_*(\zeta)\ne0$ and
$R_j(1)=c/u^{v_u(c)}=:w$, with $u\nmid w$.
Cancel $H_*(\zeta)$ in~\eqref{eq:aggregate} and reduce modulo
$1-\zeta$. Since
\[
 \Z[\zeta]/(1-\zeta)
 \simeq\Z[x]/(\Phi_{u^a}(x),x-1)
 \simeq\F_u,
\]
every phase maps to $1$ and every $R_j(\zeta)$ maps to $w$.
This gives $pw=0$ in $\F_u$, contrary to $u\ne p$ and
$u\nmid w$. Thus $a\in S$.
\end{proof}

\subsection{Reduction to a slice}

\begin{proposition}\label{prop:layers}
Fix a prime $p$ and positive integers $n,K$ with $(n,p)=1$. 
Suppose that $A\oplus B=\Z_{np^K}$ and $\Phi_p\mid A$.
Let $C_0,\ldots,C_{p-1}$ be the lowest-digit slices of
$\kappa_K(A)$, and set
$L_j=\kappa_{K-1}^{-1}(C_j)\subset\Z_{np^{K-1}}$.
The $C_j$ have a common complement in
$\Z_n\times\Z_{p^{K-1}}$, and:
\begin{enumerate}
 \item If $u\ne p$ is prime, $u^b\mid n$ for a positive integer $b$, and $\Phi_{u^b}\mid A$,
 then $\Phi_{u^b}\mid L_j$ for every $j$.
 \item If $2\le\alpha\le K$ and $\Phi_{p^\alpha}\mid A$,
 then $\Phi_{p^{\alpha-1}}\mid L_j$ for every $j$.
 \item If $\xi^n=1$, $\ord\lambda=p^\alpha$, $1\le\alpha\le K$,
 and $\kappa_K(A)(\xi,\lambda)\ne0$, then
 $C_j(\xi,\lambda^p)\ne0$ for some $j$.
\end{enumerate}
\end{proposition}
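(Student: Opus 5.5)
The plan is to apply Theorem~\ref{thm:carrier} with $Q=\kappa_K(A)$ and $P=\kappa_K(B)$, so that the $C_j$ are exactly the slices \eqref{eq:Cslice}. We first need hypothesis \eqref{eq:lowowner}. By \eqref{eq:dictionary} with $\xi=1$ and $\lambda=\zeta_p$, we have $Q(1,\zeta_p)=A(\zeta_p)$, and this vanishes because $\Phi_p\mid A$. Theorem~\ref{thm:carrier} then provides a carrier $D=D_0\subset G$ with $D\oplus C_j=G$ for every $j$. Pulling back along $\kappa_{K-1}$ gives $D^\flat\oplus L_j=\Z_{np^{K-1}}$. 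This proves the common-complement assertion, and it puts us in the setting of Lemma~\ref{lem:transfer} with $N=np^{K-1}$.

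All three items rest on one digit identity. Write the second coordinate as $j+pk$ with $0\le j<p$. Then
\[
 Q(x,t)=\sum_{j=0}^{p-1}t^j\,C_j(x,t^p)\qquad\text{in }\Z[x,t]/(x^n-1,t^{p^K}-1),
\]
and this identity is compatible with the relations because $(t^p)^{L}=t^{p^K}$. Evaluate at $\xi^n=1$ and $\ord\lambda=p^\alpha$ with $\alpha\le K$. Then $\mu=\lambda^p$ satisfies $\mu^{L}=1$, and we get
\[
 \kappa_K(A)(\xi,\lambda)=\sum_j\lambda^jC_j(\xi,\lambda^p).
\]
Item (iii) follows at once: if every $C_j(\xi,\lambda^p)$ vanished, the left side would vanish.

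For (i), take $\zeta$ of order $u^b$, and use $\xi=\zeta$, $\lambda=1$. By \eqref{eq:dictionary} in both the $K$ and $K-1$ coordinates,
\[
 0=A(\zeta)=\sum_jC_j(\zeta,1)=\sum_jL_j(\zeta).
\]
This is \eqref{eq:aggregate} with all $t_j=0$. Since $u^b\mid n\mid N$, Lemma~\ref{lem:transfer} gives $\Phi_{u^b}\mid L_j$ for all $j$.

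For (ii), take $\xi=1$ and $\lambda$ of order $p^\alpha$ with $2\le\alpha\le K$. Then $\mu=\lambda^p$ has order $p^{\alpha-1}\ge p$, and
\[
 0=A(\lambda)=\sum_j\lambda^jC_j(1,\mu),\qquad C_j(1,\mu)\in\Q(\mu).
\]
By Lemma~\ref{lem:field}(i) with $m=1$, the polynomial $t^p-\mu$ is irreducible over $\Q(\mu)$. Hence $1,\lambda,\dots,\lambda^{p-1}$ are linearly independent over $\Q(\mu)$, which forces $C_j(1,\mu)=0$ for every $j$. By \eqref{eq:cyclodict} in the $K-1$ coordinates, this says $\Phi_{p^{\alpha-1}}\mid L_j$.

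The main point needing care is the bookkeeping behind the digit identity. We must check that the slice polynomials $C_j(x,y)$, reduced modulo $y^{L}-1$, agree with the substitution $y=t^p$ modulo $t^{p^K}-1$, and that the dictionary \eqref{eq:dictionary} is applied with $K-1$ (including $K-1=0$, where $\mu=1$). Beyond that, the only substantive inputs are Theorem~\ref{thm:carrier} for (i) and the linear disjointness from Lemma~\ref{lem:field} for (ii).
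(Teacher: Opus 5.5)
Your proposal is correct and follows the paper's proof essentially step for step. You apply Theorem~\ref{thm:carrier} with $Q=\kappa_K(A)$ and $\eta=0$, use the digit assembly identity, use Lemma~\ref{lem:transfer} with zero phases for (i) and the linear independence from Lemma~\ref{lem:field} for (ii), and for (iii) note that some summand must be nonzero. You also check the hypothesis $Q(1,\zeta_p)=A(\zeta_p)=0$ explicitly, which the paper leaves implicit.
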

\begin{proof}
Apply Theorem~\ref{thm:carrier} with
$P=\kappa_K(B)$, $Q=\kappa_K(A)$, and $\eta=0$ to obtain the
common complement. Abbreviating the bivariate mask of
$\kappa_K(A)$ to $A(x,t)$, we have
\begin{equation}\label{eq:assembly}
 A(x,t)=\sum_{j=0}^{p-1}t^jC_j(x,t^p).
\end{equation}

For (i), choose $\xi$ of order $u^b$. Equations~\eqref{eq:dictionary}
and~\eqref{eq:assembly} give
\[
 0=A(\xi,1)=\sum_jC_j(\xi,1)=\sum_jL_j(\xi).
\]
The transported slices $L_j$ also have a common complement,
so Lemma~\ref{lem:transfer}, with all $t_j=0$, applies.

For (ii), choose $\lambda$ of order $p^\alpha$ and put $\omega=\lambda^p$.
Then
\[
 0=A(1,\lambda)=\sum_j\lambda^jC_j(1,\omega).
\]
The coefficients belong to $\Q(\omega)$, and
$1,\lambda,\ldots,\lambda^{p-1}$ are linearly independent over
this field by Lemma~\ref{lem:field}. Hence
$C_j(1,\omega)=0$ for every $j$, as required.

Finally, if the left side of~\eqref{eq:assembly} is nonzero at
$(\xi,\lambda)$, one summand is nonzero. This proves (iii).
If $\ord\xi=d$, the corresponding root $\xi\lambda^p$ has
order $dp^{\alpha-1}$, including order $d$ when $\alpha=1$.
\end{proof}

\subsection{Reduction to a carrier}

\begin{proposition}\label{prop:carrierreduction}
Under the hypotheses of Theorem~\ref{thm:carrier}, let
$2\le a\le K$, $\ord\lambda=p^a$, and $\xi^n=1$.
Suppose that
\[
 P(1,\lambda)=0,\qquad P(\xi,\lambda)\ne0.
\]
For $\omega=\lambda^p$, there is an $\eta\in\{0,\ldots,p-1\}$
such that
\[
 D_\eta(1,\omega)=0,\qquad D_\eta(\xi,\omega)\ne0.
\]
Moreover, $D_\eta(x,1)=P(x,1)$ and
$D_\eta\oplus C_q=\Z_n\times\Z_{p^{K-1}}$ for every $q$.
\end{proposition}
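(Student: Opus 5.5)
The plan is to write $P$ in terms of its lowest-digit slices and reduce both requirements to statements about the single-variable polynomial
\[
 \mathcal{A}(s)=\sum_{r=0}^{p-1}E_r(\xi,\omega)\,s^r\in\Q(\xi,\omega)[s].
\]
Writing the second coordinate of $\widetilde G$ as $r+pj$ gives the assembly identity
\[
 P(x,t)=\sum_{r=0}^{p-1}t^rE_r(x,t^p),
\]
the analogue of~\eqref{eq:assembly}. Evaluating at $t=\lambda$, so that $t^p=\omega$, yields $P(\xi,\lambda)=\mathcal A(\lambda)$. On the other hand, the definition~\eqref{eq:carrier} gives $D_\eta(x,y)=F_\eta(x,y)$, so $D_\eta(\xi,\omega)=\mathcal A(\omega^\eta)$.

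\emph{Vanishing at $(1,\omega)$.} Since $\ord\lambda=p^a$ with $a\ge2$, the element $\omega=\lambda^p$ has order $p^{a-1}\ge p$. From $P(1,\lambda)=0$ we get
\[
 0=\sum_r\lambda^rE_r(1,\omega),
\]
where the coefficients $E_r(1,\omega)$ lie in $\Q(\omega)$. By Lemma~\ref{lem:field}(i) with $m=1$, $t^p-\omega$ is irreducible over $\Q(\omega)$. Hence $1,\lambda,\dots,\lambda^{p-1}$ are linearly independent over $\Q(\omega)$, and so $E_r(1,\omega)=0$ for every $r$. Consequently $D_\eta(1,\omega)=\sum_r\omega^{\eta r}E_r(1,\omega)=0$ for every $\eta$, not just a chosen one.

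\emph{Nonvanishing at $(\xi,\omega)$.} The hypothesis $P(\xi,\lambda)\ne0$ says $\mathcal A(\lambda)\ne0$, so $\mathcal A$ is a nonzero polynomial of degree less than $p$. Because $\ord\omega\ge p$, the $p$ points $\omega^0,\omega^1,\dots,\omega^{p-1}$ are pairwise distinct. A nonzero polynomial of degree at most $p-1$ cannot vanish at all of them, so some $\eta\in\{0,\dots,p-1\}$ has
\[
 D_\eta(\xi,\omega)=\mathcal A(\omega^\eta)\ne0.
\]
Together with the previous step, this $\eta$ gives both required conditions.

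The remaining assertions, $D_\eta(x,1)=P(x,1)$ and $D_\eta\oplus C_q=\Z_n\times\Z_{p^{K-1}}$ for every $q$, are exactly~\eqref{eq:external} and~\eqref{eq:common} from Theorem~\ref{thm:carrier}. They hold for every $\eta$, in particular for the one chosen.

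The only point requiring care is that $a\ge2$. This is what makes $\ord\omega\ge p$, giving $p$ distinct evaluation points for $\mathcal A$, and also what makes Lemma~\ref{lem:field}(i) applicable with $v=a-1\ge1$. Once that is in place, the argument is a direct evaluation together with a degree count.
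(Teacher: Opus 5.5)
Your proposal is correct and matches the paper's proof step for step. The assembly identity and Lemma~\ref{lem:field} force every $E_r(1,\omega)=0$, a degree count at the $p$ distinct points $\omega^\eta$ (distinct because $\ord\omega=p^{a-1}\ge p$) produces the required $\eta$, and the remaining assertions are quoted from Theorem~\ref{thm:carrier}.
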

\begin{proof}
The identity
\[
 0=P(1,\lambda)=\sum_r\lambda^rE_r(1,\omega)
\]
and Lemma~\ref{lem:field} imply $E_r(1,\omega)=0$ for every
$r$. Thus $D_\eta(1,\omega)=0$ for all $\eta$.

Put $e_r=E_r(\xi,\omega)$. The vector $(e_r)_{r=0}^{p-1}$
is nonzero, since $\sum_r\lambda^re_r=P(\xi,\lambda)\ne0$.
The polynomial $f(t)=\sum_re_rt^r$ has degree at most $p-1$,
whereas $1,\omega,\ldots,\omega^{p-1}$ are distinct because
$\ord\omega=p^{a-1}\ge p$. Hence
\[
 D_\eta(\xi,\omega)=f(\omega^\eta)\ne0
\]
for at least one $0\le\eta<p$. The remaining assertions are
part of Theorem~\ref{thm:carrier}.
\end{proof}

\section{Descent and the main theorem}\label{sec:descent}

\begin{proposition}\label{prop:onestep}
Suppose that $A\oplus B=\Z_M$ and that
$p,p_2,\ldots,p_k$ are distinct primes, with $k\ge2$.
Let $\alpha,\alpha_2,\ldots,\alpha_k$ be positive integers such that
\[
 p^\alpha\mid M,\quad p_i^{\alpha_i}\mid M,\qquad
 \Phi_{p^\alpha}\mid A,\quad\Phi_{p_i^{\alpha_i}}\mid A
 \quad(2\le i\le k).
\]
Put $d=\prod_{i=2}^kp_i^{\alpha_i}$. If $\Phi_{p^\alpha d}\nmid A$,
there is a factorization $A'\oplus B'=\Z_{M/p}$ with the following
properties:
\begin{enumerate}
 \item If $\alpha\ge2$, then $\Phi_{p^{\alpha-1}}$ and all
 $\Phi_{p_i^{\alpha_i}}$ divide $A'$, but
 $\Phi_{p^{\alpha-1}d}\nmid A'$.
 \item If $\alpha=1$, then all $\Phi_{p_i^{\alpha_i}}$ divide
 $A'$, but $\Phi_d\nmid A'$.
\end{enumerate}
\end{proposition}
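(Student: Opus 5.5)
Write $M=np^K$ with $(n,p)=1$, so $K\ge\alpha$ and $d\mid n$, and work in the CRT coordinates $\kappa_K$. Choose $\xi$ of order $d$ and $\lambda$ of order $p^\alpha$. By \eqref{eq:cyclodict}, the hypothesis $\Phi_{p^\alpha d}\nmid A$ reads $\kappa_K(A)(\xi,\lambda)\ne0$. By Lemma~\ref{lem:census}, $\Phi_p$ divides exactly one of $A$ and $B$. The plan splits according to which factor owns $\Phi_p$: we take a slice when $\Phi_p\mid A$ and a carrier when $\Phi_p\mid B$. In either case the new factorization lives in $\Z_n\times\Z_{p^{K-1}}$, which we transport back to $\Z_{M/p}$ by $\kappa_{K-1}^{-1}$.

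\emph{Case $\Phi_p\mid A$.} Apply Proposition~\ref{prop:layers}. Let $D$ be the common complement of the slices, and put $A'=L_j=\kappa_{K-1}^{-1}(C_j)$ and $B'=\kappa_{K-1}^{-1}(D)$. Part (i) gives $\Phi_{p_i^{\alpha_i}}\mid L_j$ for every $j$. If $\alpha\ge2$, part (ii) gives $\Phi_{p^{\alpha-1}}\mid L_j$ for every $j$. Part (iii) supplies some $j$ with $C_j(\xi,\lambda^p)\ne0$. Here $\lambda^p$ has order $p^{\alpha-1}$, so \eqref{eq:cyclodict} in level $K-1$ gives $\Phi_{p^{\alpha-1}d}\nmid L_j$. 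When $\alpha=1$ the same statement reads $\Phi_d\nmid L_j$. Since (i) and (ii) hold for all $j$, we simply choose this particular $j$.

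\emph{Case $\Phi_p\mid B$.} Apply Theorem~\ref{thm:carrier} with $P=\kappa_K(A)$ and $Q=\kappa_K(B)$; here $Q(1,\zeta_p)=0$. Take $A'=\kappa_{K-1}^{-1}(D_\eta)$ and $B'=\kappa_{K-1}^{-1}(C_q)$ for any $q$.

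\begin{itemize}
\item The factors $\Phi_{p_i^{\alpha_i}}$ come from $D_\eta(x,1)=P(x,1)$: evaluating at $\xi_i$ of order $p_i^{\alpha_i}$ gives $D_\eta(\xi_i,1)=A(\xi_i,1)=0$. This holds for every $\eta$.
\item If $\alpha\ge2$, Proposition~\ref{prop:carrierreduction} applies with $a=\alpha$. The hypothesis $P(1,\lambda)=0$ is $\Phi_{p^\alpha}\mid A$. The proposition yields $\eta$ with $D_\eta(1,\omega)=0$ and $D_\eta(\xi,\omega)\ne0$, which are exactly $\Phi_{p^{\alpha-1}}\mid A'$ and $\Phi_{p^{\alpha-1}d}\nmid A'$.
\item If $\alpha=1$, then $\Phi_p\mid A$, contradicting $\Phi_p\mid B$ by the disjointness in Lemma~\ref{lem:census}. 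So this subcase cannot occur.
\end{itemize}

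The only delicate points are bookkeeping. First, the non-divisibility must be transferred for a single well-chosen slice or carrier while the divisibilities hold uniformly; Proposition~\ref{prop:layers}(iii) and Proposition~\ref{prop:carrierreduction} are designed for exactly this. Second, we must check that $d$, the order of $\xi$, is coprime to $p$, so that \eqref{eq:cyclodict} applies. No genuine obstacle is expected beyond invoking these earlier results correctly.
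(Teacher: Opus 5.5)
Your proposal is correct and follows the paper's proof essentially step for step. Like the paper, you split on which factor $\Phi_p$ divides. When $\Phi_p\mid A$ you take a slice $L_j$ chosen by Proposition~\ref{prop:layers}(iii). When $\Phi_p\mid B$ you take a carrier $D_\eta$ chosen by Proposition~\ref{prop:carrierreduction}, retaining the $\Phi_{p_i^{\alpha_i}}$ through $D_\eta(x,1)=P(x,1)$, and Lemma~\ref{lem:census} rules out $\alpha=1$ in that case.
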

\begin{proof}
Write $M=np^K$, with $(n,p)=1$, and choose roots of unity $\xi,\lambda$ of orders $d,p^\alpha$ respectively. Then
\[
 \kappa_K(A)(\xi,\lambda)=A(\xi\lambda)\ne0.
\]
By Lemma~\ref{lem:census}, $\Phi_p$ divides exactly one
of $A$ and $B$.

If $\Phi_p\mid A$, apply Proposition~\ref{prop:layers}.
Every transported slice retains the selected factors
$\Phi_{p_i^{\alpha_i}}$ and, when $\alpha\ge2$,
$\Phi_{p^{\alpha-1}}$. Choose a slice $A'=L_j$ that is
nonzero at $\xi\lambda^p$, and let $B'$ be its common
complement. The root $\xi\lambda^p$ has order
$dp^{\alpha-1}$, giving the required nondivisibility.

If $\Phi_p\mid B$, then $\alpha\ge2$. Apply
Proposition~\ref{prop:carrierreduction} with
$P=\kappa_K(A)$ and $Q=\kappa_K(B)$, and set
\[
 A'=\kappa_{K-1}^{-1}(D_\eta),\qquad
 B'=\kappa_{K-1}^{-1}(C_0).
\]
These sets tile $\Z_{M/p}$. The evaluations at
$(1,\lambda^p)$ and $(\xi,\lambda^p)$ give
$\Phi_{p^{\alpha-1}}\mid A'$ and
$\Phi_{dp^{\alpha-1}}\nmid A'$. For a primitive
$p_i^{\alpha_i}$th root $\xi_i$,
\[
 A'(\xi_i)=D_\eta(\xi_i,1)=P(\xi_i,1)=A(\xi_i)=0,
\]
so the other selected factors are retained.
\end{proof}

\medskip

\begin{proof}[\textbf{Proof of Theorem~\ref{thm:main}}]
Coven and Meyerowitz proved that (T1) and (T2) imply
tiling~\cite[Theorem A]{CM}, and that
\textup{(T1)} is necessary~\cite[Theorem B1]{CM}. 
It remains to prove that every finite integer tile satisfies \textup{(T2)}.

Suppose now that some integer tile fails (T2). Among all such
tiles and all choices of prime powers witnessing a failure,
choose $A$ and $(p_i^{\alpha_i})_{i=1}^k$ for which
$S=\alpha_1+\cdots+\alpha_k$ is minimal. Necessarily $k\ge2$.
Choose a cyclic realization $A\oplus B=\Z_M$ with each
$p_i^{\alpha_i}\mid M$. Evaluation at $M$th roots is unchanged
by reduction modulo $M$, so all the selected divisibilities
and the failure of $\Phi_{\prod_i p_i^{\alpha_i}}\mid A$
persist.

Apply Proposition~\ref{prop:onestep} with $p=p_1$.
If $\alpha_1\ge2$, it decreases $\alpha_1$ by one; if
$\alpha_1=1$, it removes $p_1$ from the list. In either case
we obtain a failure of (T2) with total exponent $S-1$.
Taking standard representatives in
$A'\oplus B'=\Z_N$ gives the integer tiling
$A'\oplus(B'+N\Z)=\Z$, contradicting minimality. 
Thus every integer tile satisfies (T2).
\end{proof}

\section{An example modulo 72}\label{sec:example}

Consider the sets from the final example of~\cite{CM}:
\begin{equation}\label{ex:sets}
\begin{aligned}
 P&=\{0,1,5,6,12,25,29,36,42,48,49,53\},\\
 Q&=\{0,8,16,18,26,34\}.
\end{aligned}
\end{equation}
We shall verify $P\oplus Q=\Z_{72}$ and compute the reductions
for $p=3$ and $p=2$. The ternary reduction illustrates the
common-complement construction when neither factor is
contained in a single residue class modulo $3$.

\subsection{Ternary slices}

Take $p=3$, $n=8$, and $K=2$. Under
$\kappa_2:\Z_{72}\to\Z_8\times\Z_9$, the slices are
\begin{equation}\label{ex:ternarysets}
\begin{aligned}
 E_0&=\{(0,0),(0,1),(2,2),(4,0),(4,1),(6,2)\},\\
 E_1&=\{1\}\times\Z_3,\qquad E_2=\{5\}\times\Z_3,\\
 C_0&=\{(0,0),(2,0)\},\qquad
 C_1=C_2=\{(0,2),(2,2)\}.
\end{aligned}
\end{equation}
Put $H=1+y+y^2$. In $\Z[x,y]/(x^8-1,y^3-1)$,
\begin{equation}\label{ex:ternarymasks}
\begin{aligned}
 E_0&=(1+x^4)(1+y)+(x^2+x^6)y^2,\\
 E_1&=xH,\qquad E_2=x^5H,\\
 C_0&=1+x^2,\qquad C_1=C_2=y^2C_0.
\end{aligned}
\end{equation}
The bivariate masks of the original factors are
\begin{equation}\label{ex:ternaryassembly}
\begin{aligned}
 P_3(x,t)&=(1+x^4)(1+t^3)+(x^2+x^6)t^6\\
 &\quad+x(t+t^4+t^7)+x^5(t^2+t^5+t^8),\\
 Q_3(x,t)&=(1+x^2)(1+t^7+t^8).
\end{aligned}
\end{equation}
In particular, $Q_3(1,\zeta_3)=0$, as required by
\eqref{eq:lowowner}.

Set
\[
 f_0=1+x^2+x^4+x^6,\qquad f_1=x+x^3,\qquad f_2=x^5+x^7.
\]
Direct multiplication gives
\begin{equation}\label{ex:Umatrix}
 (E_rC_q)_{0\le r,q\le2}
 =H\begin{pmatrix}
 f_0&f_0&f_0\\f_1&f_1&f_1\\f_2&f_2&f_2
 \end{pmatrix}.
\end{equation}
Since $yH=H$ and $\sum_rf_r=H_8(x)$, each carry sum
in~\eqref{eq:carry} equals $H_8(x)H$. Equivalently,
\[
 P_3(x,t)Q_3(x,t)=H_8(x)H_9(t)
 \quad\text{in }\Z[x,t]/(x^8-1,t^9-1),
\]
which proves $P\oplus Q=\Z_{72}$.
The partition in the proof of Theorem~\ref{thm:carrier} is
\[
 X_0=\{0,2,4,6\},\qquad X_1=\{1,3\},\qquad
 X_2=\{5,7\},\qquad Y_q=\varnothing.
\]

Here every carrier is the same:
\begin{equation}\label{ex:carrier3}
 F_\eta=E_0+y^\eta E_1+y^{2\eta}E_2
       =E_0+(x+x^5)H=:D^{(3)}(x,y).
\end{equation}
Thus $D^{(3)}=E_0\sqcup(\{1,5\}\times\Z_3)$ and
$D^{(3)}\oplus C_q=\Z_8\times\Z_3$ for every $q$. 
Evaluating the second variable at $1$ gives 
\begin{equation}\label{ex:projections3}
\begin{aligned}
 D^{(3)}(x,1)=P_3(x,1)&=(1+x^4)(x+1)(x+2),\\
 \sum_qC_q(x,1)=Q_3(x,1)&=3(1+x^2).
\end{aligned}
\end{equation}
The inverse CRT map $(h,j)\mapsto9h+16j\pmod{24}$ gives
\begin{equation}\label{ex:flat3}
\begin{aligned}
 D^{(3),\flat}&=\{0,1,2,4,5,9,12,13,14,16,17,21\},\\
 L_0&=\{0,18\},\qquad L_1=L_2=\{2,8\},
\end{aligned}
\end{equation}
with $D^{(3),\flat}\oplus L_j=\Z_{24}$.

\subsection{Cyclotomic divisors in the ternary reduction}

The prime-power factors of the original masks are
\begin{equation}\label{ex:pure}
 S_P=\{2,8,9\},\qquad S_Q=\{3,4\}.
\end{equation}
Indeed,~\eqref{ex:projections3} gives $\Phi_2,\Phi_8\mid P$
and $\Phi_4\mid Q$, while
\[
 P_3(1,t)=(2+t+t^2)(1+t^3+t^6),\qquad
 Q_3(1,t)=2(1+t^7+t^8)
\]
give $\Phi_9\mid P$ and $\Phi_3\mid Q$.
The values at one of these factors already multiply to
$|P|=12$ and $|Q|=6$, so no further prime-power factors
can occur.

For the reduction of $Q$, the slices in~\eqref{ex:flat3}
have masks
\begin{equation}\label{ex:Lfactor}
\begin{aligned}
 L_0(x)&=(1+x^2)\sum_{k=0}^8(-1)^kx^{2k},\\
 L_1(x)=L_2(x)&=(1+x^2)(x^6-x^4+x^2).
\end{aligned}
\end{equation}
Thus their common prime-power factor set is $\{4\}$:
the reduction removes $\Phi_3$ and preserves $\Phi_4$.
For a primitive $2^a$th root $\zeta$, $a\in\{1,2,3\}$,
Lemma~\ref{lem:transfer} says that
\[
 \sum_{j=0}^2\zeta^{t_j}L_j(\zeta)=0
\]
can hold only for $a=2$, when every summand is zero.
For $a=1$ or $3$, cancellation of $\Phi_4(\zeta)$ followed
by reduction modulo $1-\zeta$ would give $1+1+1=0$ in
$\F_2$.
To see the nonvanishing assertion of
Proposition~\ref{prop:layers}, take $\ord\lambda=9$ and
$\omega=\lambda^3$. Then
\[
 Q_3(-1,\lambda)=2\lambda^7(1+\lambda+\lambda^2)\ne0,
 \qquad
 (C_j(-1,\omega))_{j=0}^2=(2,2\omega^2,2\omega^2).
\]
Consequently $\Phi_{18}\nmid Q$ and $\Phi_6\nmid L_j$
for all $j$.

For the reduction of $P$, the lowest factor $\Phi_3$
divides its complement $Q$. Proposition~\ref{prop:carrierreduction}
applies with $\ord\lambda=9$, $\omega=\lambda^3$, and $\xi=i$.
We have $P_3(1,\lambda)=0$ and
\begin{equation}\label{ex:carriernonzero}
 E_0(i,\omega)=-4\omega^2,\qquad
 E_1(i,\omega)=E_2(i,\omega)=0.
\end{equation}
Thus $P_3(i,\lambda)=-4\omega^2\ne0$.
Every $\eta$ works in this example, since all carriers coincide:
\[
 D^{(3)}(1,\omega)=0,\qquad
 D^{(3)}(i,\omega)=-4\omega^2\ne0.
\]
In univariate notation,
\begin{equation}\label{ex:carrierdivisors}
 \Phi_3\mid D^{(3),\flat},\qquad
 \Phi_{36}\nmid P,\qquad
 \Phi_{12}\nmid D^{(3),\flat}.
\end{equation}
The projection identity preserves $\Phi_2$ and $\Phi_8$,
so $S_{D^{(3),\flat}}=\{2,3,8\}$.

\subsection{Binary reduction}

Now use $\Z_{72}\simeq\Z_9\times\Z_8$, with $p=2$ and
$K=3$. Since $\Phi_2\mid P$, take the lowest-digit slices
of $P$:
\begin{equation}\label{ex:binarysets}
\begin{aligned}
 S_0&=\{(0,0),(0,2),(3,0),(3,2),(6,1),(6,3)\},\\
 S_1&=\{(1,0),(4,0),(7,0),(2,2),(5,2),(8,2)\}.
\end{aligned}
\end{equation}
In $\Z[x,y]/(x^9-1,y^4-1)$, put
$J=1+x^3+x^6$ and $G_0=1+x^7+x^8$. Then
\begin{equation}\label{ex:binarymasks}
 S_0=(1+y^2)(1+x^3+x^6y),\qquad
 S_1=J(x+x^2y^2).
\end{equation}
The even slice of $Q$ is
\[
 D^{(2)}=\{0,7,8\}\times\{0,1\},\qquad
 D^{(2)}(x,y)=G_0(1+y),
\]
and its odd slice is empty. Since $G_0J=H_9(x)$, direct
multiplication gives
\begin{equation}\label{ex:binaryproducts}
 D^{(2)}S_0=D^{(2)}S_1=H_9(x)H_4(y).
\end{equation}
Thus $D^{(2)}$ is a common complement for $S_0,S_1$.
The inverse map $(h,j)\mapsto28h+9j\pmod{36}$ gives
\begin{equation}\label{ex:flat2}
\begin{aligned}
 D^{(2),\flat}&=\{0,8,9,16,17,25\},\\
 M_0&=\{0,12,15,18,30,33\},\\
 M_1&=\{2,4,14,16,26,28\},
\end{aligned}
\end{equation}
and $D^{(2),\flat}\oplus M_j=\Z_{36}$.

If $\xi$ is a primitive ninth root, then $J(\xi)=0$, so
$S_0(\xi,1)=S_1(\xi,1)=0$. Thus $\Phi_9$ is preserved.
For $\ord\lambda=8$ and $\omega=\lambda^2$,
\[
 S_0(1,\omega)=(1+\omega^2)(2+\omega)=0,\qquad
 S_1(1,\omega)=3(1+\omega^2)=0.
\]
This lowers $\Phi_8$ to $\Phi_4$, giving
\begin{equation}\label{ex:Mpure}
 S_{M_0}=S_{M_1}=\{4,9\}.
\end{equation}
In the transfer lemma with $p=2$ and $u=3$, the common
factor is $\Phi_9$ and $(M_j/\Phi_9)(1)=2$.
A phased sum vanishing at a primitive cube root would
therefore give $2+2=0$ in $\F_3$, which is impossible.

The nonvanishing condition can select just one slice.
Take $\xi$ to be a primitive cube root, while keeping
$\ord\lambda=8$ and $\omega=\lambda^2$. Then
\[
 S_0(\xi,\omega)=0,\qquad
 S_1(\xi,\omega)=3(\xi-\xi^2)\ne0.
\]
Writing $P_2$ for the mask of $P$ in these coordinates,
we obtain $P_2(\xi,\lambda)=3\lambda(\xi-\xi^2)\ne0$.
Hence
\begin{equation}\label{ex:binarynonzero}
 \Phi_{24}\nmid P,\qquad
 \Phi_{12}\mid M_0,\qquad \Phi_{12}\nmid M_1.
\end{equation}
Only $M_1$ retains this nonzero evaluation. The root $\xi$ here has order $3$, whereas the prime-power divisor
retained above is $\Phi_9$.

One can also reduce $Q$ through its carrier $D^{(2)}$.
Indeed,
\[
 Q_2(x,t)=G_0(x)(1+t^2),\qquad
 D^{(2)}(x,y)=G_0(x)(1+y).
\]
Thus $\Phi_4\mid Q$ descends to $\Phi_2\mid D^{(2),\flat}$,
and the projection preserves $\Phi_3$. This gives
$S_{D^{(2),\flat}}=\{2,3\}$.
This choice does not illustrate the nonvanishing conclusion
of Proposition~\ref{prop:carrierreduction}: when
$\ord\lambda=4$, $Q_2(\xi,\lambda)=0$ for every $\xi^9=1$;
when $\ord\lambda=8$, the pure-zero hypothesis fails.

\begin{center}
\small
\renewcommand{\arraystretch}{1.2}
\begin{tabular}{ccccc}
\toprule
Set reduced & $p$ & Factor divisible by $\Phi_p$ & Reduced set & Prime-power cyclotomic indices\\
\midrule
$P$ & $3$ & $Q$ & $D^{(3),\flat}\subset\Z_{24}$ & $\{2,3,8\}$\\
$Q$ & $3$ & $Q$ & $L_j\subset\Z_{24}$ & $\{4\}$\\
$P$ & $2$ & $P$ & $M_j\subset\Z_{36}$ & $\{4,9\}$\\
$Q$ & $2$ & $P$ & $D^{(2),\flat}\subset\Z_{36}$ & $\{2,3\}$\\
\bottomrule
\end{tabular}
\end{center}

\subsection{The case \texorpdfstring{$K=1$}{K=1}}

Continue from $L_0\oplus D^{(3),\flat}=\Z_{24}$ with
$p=3$, $n=8$, and $K=1$. The slices, now subsets of $\Z_8$,
are
\[
\begin{aligned}
 \widehat E_0&=\{0,2\},\qquad
 \widehat E_1=\widehat E_2=\varnothing,\\
 \widehat C_0&=\widehat C_1=\{0,1,4,5\},\qquad
 \widehat C_2=\{1,2,5,6\}.
\end{aligned}
\]
For each $q$,
$(1+x^2)\widehat C_q(x)=H_8(x)$ in $\Z[x]/(x^8-1)$.
Every carrier is therefore $\{0,2\}$, and
$\{0,2\}\oplus\widehat C_q=\Z_8$.

\section*{Acknowledgements}

The last four authors were classmates in the Chern class at
Nankai University. Since December 2024 they have met weekly
online to study integer tilings. They thank Terry Tao for
the notes~\cite{Tao} that introduced them to this subject.

SageMath, MATLAB, and language models including DeepSeek and
Doubao were used to write code for small-case checks. 
The general reduction emerged from extended work
with ChatGPT-5.6 Sol, with the participation of the first two 
authors, graduate students at the ZJNU Tiling Studio
under the supervision of the 
corresponding author. All authors independently verified the derivations, 
computations, logical dependencies and numerous examples, and 
completed the proof together. They take responsibility for its mathematical
content and have retained the interaction records.

\end{document}